\documentclass[12pt]{amsart}

\usepackage[utf8]{inputenc}
\usepackage{listings}
\usepackage{graphicx}
\usepackage{pgffor} 
\usepackage{float}
\usepackage{graphics} 
\usepackage{fancyhdr}
\usepackage{amsfonts}
\usepackage{amsmath}
\usepackage{amssymb,amsfonts}
\usepackage{enumerate}
\usepackage{graphicx}
\usepackage{amsmath,amssymb,amsthm}
\usepackage{enumerate}
\usepackage{enumitem}
\usepackage{setspace}
\usepackage{amsthm}
\usepackage{mathrsfs}
\usepackage{nccmath}
\usepackage{mathtools}
\usepackage{appendix}
\usepackage{subcaption}
\usepackage{stmaryrd}
\usepackage{tikz-cd}
\usepackage[OT2,T1]{fontenc}
\usepackage[normalem]{ulem}
\usepackage{mathrsfs}

\graphicspath{{Images/}}

\newcommand{\C}{\mathbb{C}}
\newcommand{\Z}{\mathbb{Z}}

\newcommand{\F}{\mathbb{F}}
\newcommand{\Pn}[1]{\mathbb{P}^{#1}_{k}}

\DeclareFontFamily{U}{wncy}{}
\DeclareFontShape{U}{wncy}{m}{n}{<->wncyr10}{}
\DeclareSymbolFont{mcy}{U}{wncy}{m}{n}
\DeclareMathSymbol{\Sha}{\mathord}{mcy}{"58} 

\newtheorem{thm}{Theorem}[subsection]
\newtheorem*{thm*}{Theorem}
\newtheorem{cor}[thm]{Corollary}
\newtheorem{lem}[thm]{Lemma}
\newtheorem{prop}[thm]{Proposition}

\newtheorem{defn}[thm]{Definition}
\newtheorem*{ack*}{Acknowledgements}
\theoremstyle{definition}
\newtheorem{rem}[thm]{Remark}

\renewcommand{\thethm}{%
  \ifnum\value{subsection}=0
    \thesection.\arabic{thm}%
  \else
    \thesubsection.\arabic{thm}%
  \fi
}

\DeclareSymbolFont{cyrletters}{OT2}{wncyr}{m}{n}
\DeclareMathSymbol{\Sha}{\mathalpha}{cyrletters}{"58}

\title[unramified cohomology over non-closed fields]{On computation of unramified cohomology over non-closed fields}
\author{Wenhao Li}

\address{Courant Institute of Mathematical Sciences, New York University, New York, 10012, U.S.A.}
\email{wl1693@nyu.edu}

\usepackage{filecontents}
\usepackage[style=alphabetic,backend=biber,maxcitenames=50,maxnames=50,maxalphanames=40]{biblatex}
\renewbibmacro{in:}{}
\renewbibmacro*{volume+number+eid}{%
\printfield{volume}%
\setunit{, no.}%
\printfield{number}%
\setunit{\bibeidpunct}%
\printfield{eid}}
\begin{document}

\begin{abstract}
 We give examples of varieties $X$ defined  over a non-algebraically closed field $k$ with nontrivial unramified cohomology, in the case when the field $k$ is of bounded cohomological dimension, or the variety $X$ is a conic bundle over a rational surface and $k$ is an arbitrary field of characteristic different from $2$.  
\end{abstract}

\maketitle

\section{Introduction}

In their famous paper  \cite{AM72}, Artin and Mumford constructed one of the first examples of unirational non rational complex varieties. Building up on this work,  Colliot-Th\'{e}l\`{e}ne and Ojanguren \cite{CTO89} explained this example using  birational invariants given by the unramified cohomology, and introduced a method for computing these invariants for varieties fibred in quadrics over a rational base. Recently, unramified cohomology has been extensively used for detecting the failure of (stable) rationality for many classes of varieties, in the framework of the specialization method developed by Voisin \cite{V15} and Colliot-Th\'{e}l\`{e}ne and Pirutka \cite{CTP16A}, and applied recently in numerous works. For instance, computations of unramified cohomology of quadric bundles over algebraically closed fields appeared in \cite{HkT16}, \cite{HPT18}, \cite{P18},  \cite{ABP18}, \cite{S18}, \cite{S19A}, and \cite{ABBP20}.

In \cite[theorem 11.3.14]{CTS21} and \cite{AP11, P18}, Colliot-Th\'{e}l\`{e}ne and Pirutka constructed examples of conic bundles over a rational surface and, respectively, higher-dimensional quadric fibrations  $\pi:X\to S$ with nontrivial unramified cohomology over $\C$ or $\F_p$. Such fibrations are ramified over specifically constructed hypersurfaces.

While these results primarily focused on either lower degree invariants or algebraically closed base fields, the behavior of higher degree unramified cohomology over non-closed fields remains less explored. In this paper, we generalize Colliot-Th\'{e}l\`{e}ne and Pirutka's configuration to varieties over fields with cohomological dimension $\leq d$. 

Furthermore, we  extend Colliot-Th\'{e}l\`{e}ne's formula \cite[theorem 11.3.14]{CTS21}  for Brauer groups of conic bundles over a surface to non-closed fields with characteristic not $2$.

\medskip

This paper is organized as follows.
Section 2 summarizes the definitions and basic properties of unramified cohomology, relative unramified cohomology, quadric fibrations, and cycle modules. We also recall some local results on residue maps for quadric fibrations.

Section 3 aims at  constructing quadric fibrations with nontrivial unramified cohomology groups over a base field of cohomological dimension $\leq d$. See Proposition \ref{config}. We also give a few explicit examples.

Section 4 extends Colliot-Th\'{e}l\`{e}ne's formula for computing the Brauer group of a conic bundle over a smooth, projective, rational surface to any field with characteristic not $2$. The formula is the following (see theorem \ref{mainformula}):
    \begin{thm}\label{mainformula}
Let $k$ be a field of characteristic $char(k)\neq2$. Let $S$ be a smooth, projective, rational surface over $k$ and let $F$ be the field of functions of $S$. Let $X$ be a threefold equipped with a conic bundle structure $\pi: X\to S$ and let $\alpha\in Br(F)[2]$ be the class corresponding to the quaternion algebra $A$ associated to the conic given by the generic fiber $Q$ of $\pi$. Assume that $\alpha$ is nonzero and that the ramification curve 

$$C\coloneqq \left\{x\in S^{(1)}\mid \partial^2_x(\alpha)\neq0\right\}$$
has smooth irreducible components intersecting transverselly. Let $C=\cup_1^nC_i$ be the irreducible decomposition of $C$, and let 
$$\{(\gamma_i)\}_i\in \oplus_1^n H^1(\kappa(C_i),\Z/2\Z)$$ be the associated family of residues of $\alpha$. For $\zeta=\{(m_i)\}_i\in (\Z/2\Z)^n$, denote

    $$\gamma_{\zeta}=(\gamma_i^{m_i})\in\oplus_{x\in{S}^{(1)}}H^1(\kappa(x),\Z/2\Z).$$ 
    
    Consider the subgroup $H\subset(\Z/2\Z)^n$ of $n$-tuples $(m_i)$ such that 
    \begin{itemize}
    
    \item [(1)] for every $\zeta\in H$, $\gamma_{\zeta}$ lifts\footnote{ $\;$ i.e. there exists a $\alpha_\zeta\in H^2(F,\Z/2\Z)$ such that $\phi_2(\alpha_{\zeta})=\gamma_{\zeta}$ as in definition (\ref{deflifts})} to $H^2$, in particular, if $i\neq j$ and there is a point $P\in C_i\cap C_j$ such that $\partial^1_P(\gamma_i)=\partial^1_P(\gamma_j)\neq0$ then $m_i=m_j$;
        
        \item [(2)] if there is a point $P\in C_i\cap C_j$ such that $\partial^1_P(\gamma_i)=\partial^1_P(\gamma_j)=0$ but $m_i=1\neq m_j$, then $\overline{\gamma_i}=1$ in $\kappa(P)^\times/\kappa(P)^{\times2}.$
    \end{itemize}
    Then $H^2_{nr}(k(X)/k,\Z/2\Z)/H^2(k,\Z/2\Z)=H/(1,\ldots,1)$.
\end{thm}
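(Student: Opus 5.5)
The plan follows Colliot-Thélène's argument over $\C$ (\cite[theorem 11.3.14]{CTS21}) and tracks where the constant classes $H^2(k,\Z/2\Z)$ and the non-closedness of residue fields enter. Since $\pi$ is a conic bundle with generic fibre $Q$, the kernel of $H^2(F,\Z/2\Z)\to H^2(F(Q),\Z/2\Z)$ is $\{0,\alpha\}$ (Amitsur). The restriction $H^2(F,\Z/2\Z)\to H^2(k(X),\Z/2\Z)$ is moreover surjective onto the classes unramified over the generic point of $S$, since $Br(F(Q))/\mathrm{im}\,Br(F)$ has no ramification coming from $Q$ itself. The first step is therefore to show that every element of $H^2_{nr}(k(X)/k,\Z/2\Z)$ is the image of some $\beta\in H^2(F,\Z/2\Z)$, unique up to adding $\alpha$. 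This uses the standard fact that an unramified class of a conic over $F$ comes from $Br(F)$, because the conic has trivial unramified Brauer group over $F$ modulo $Br(F)$.

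Next, determine which $\beta$ give unramified classes on $X$. Take a codimension one point $x\in S^{(1)}$ with $x\notin C$. Then $Q$ has good reduction at $x$, and $\beta$ is unramified on $X$ along the fibre over $x$ if and only if $\partial_x(\beta)=0$. Now take $x=C_i$. The fibre over $C_i$ is a pair of lines conjugate over $\kappa(C_i)(\sqrt{\gamma_i})$. Using the local results on residue maps for quadric fibrations recalled in Section 2, $\beta$ is unramified along the components over $C_i$ if and only if $\partial_{C_i}(\beta)\in\{0,\gamma_i\}$. Write $\partial_{C_i}(\beta)=\gamma_i^{m_i}$. Then $\phi_2(\beta)=\gamma_\zeta$ with $\zeta=(m_i)$, so condition (1) (liftability) is exactly the statement that $\zeta$ arises from some $\beta$. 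By the Bloch--Ogus/cycle-module complex on the rational surface $S$, the compatibility $\partial^1_P(\gamma_i)=\partial^1_P(\gamma_j)$ at crossing points forces $m_i=m_j$ when the residues there are nonzero.

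The third step handles the closed points, and I expect it to be the main obstacle. One must check whether $\beta$ is unramified at the codimension two points of $X$ lying over a point $P\in C_i\cap C_j$. This is done on a blow-up of $P$, using the exceptional divisor $E$. At $E$ the residue of $\alpha$ is $\gamma_i+\gamma_j$ evaluated at $P$, and the residue of $\beta$ is computed by the same formula with weights $m_i,m_j$. Suppose $\partial^1_P(\gamma_i)=\partial^1_P(\gamma_j)=0$ and $m_i\neq m_j$. Then the residue of $\beta$ along $E$ is $\overline{\gamma_i}\in\kappa(P)^\times/\kappa(P)^{\times2}$. This must be $0$ or the residue of $\alpha$, and the latter is trivial since $\overline{\gamma_i}=\overline{\gamma_j}$. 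Hence we get condition (2). Over $\C$ this condition is vacuous, which is why it is new here. To make this rigorous, I would use the transversality assumption to take local coordinates $u,v$ with $\alpha=(u,a)+(v,b)+(\ldots)$ and compute directly. I would also check that further blow-ups give no new conditions.

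Finally, observe what the map $\beta\mapsto\zeta$ kills and identifies. Classes with $\zeta=0$ are unramified on $S$, hence lie in $H^2_{nr}(F)=Br(S)[2]$. Since $S$ is smooth projective rational, this is $H^2(k,\Z/2\Z)$ modulo the Picard contribution. That contribution vanishes if the relevant $H^1(k,\mathrm{Pic}\,\bar S)$ term is handled, and this is a point to check carefully over non-closed $k$. The class $\alpha$ maps to $(1,\ldots,1)$ and dies in $k(X)$. Hence $H^2_{nr}(k(X)/k,\Z/2\Z)/H^2(k,\Z/2\Z)\cong H/(1,\ldots,1)$.
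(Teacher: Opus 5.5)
Your outline has the same architecture as the paper's proof: Arason plus the kernel $\langle\alpha\rangle$ to write unramified classes as $\pi^*\alpha_0$; good reduction off $C$ and Artin's two-conjugate-lines model over $C_i$ to force $\partial^2_{C_i}(\alpha_0)\in\{0,\gamma_i\}$; a blow-up of the node for condition (2); and rationality of $S$ for the kernel.

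The step producing condition (2), however, rests on the equality $\overline{\gamma_i}=\overline{\gamma_j}$ at a node $P$ with $\partial^1_P(\gamma_i)=\partial^1_P(\gamma_j)=0$, and you assert this without proof. This is exactly where the hypothesis that $\alpha$ is a quaternion class enters, and it is not optional. If $\overline{\gamma_i}\neq\overline{\gamma_j}$, then $\alpha$ is ramified along the exceptional curve $E$ with residue $\overline{\gamma_i}+\overline{\gamma_j}$. Your criterion $\partial_E(\beta)\in\{0,\partial_E(\alpha)\}$ then only gives ``$\overline{\gamma_i}=0$ or $\overline{\gamma_j}=0$'', which is not (2). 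The paper gets the equality in Lemma \ref{Scor} from Saltman's local forms and index computation.

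You can also prove it directly. At the generic point of $C_j$, write $\alpha=\delta+(a_j,\pi_j)$ with $\delta=\alpha'+(a_i,\pi_i)$ unramified there. Since $\alpha$ has index $2$ over the completion at $C_j$, the specialization $\bar\delta\in H^2(\kappa(C_j),\Z/2\Z)$ is split by $\kappa(C_j)(\sqrt{\gamma_j})$, so $\bar\delta=(\gamma_j,t)$ for some $t\in\kappa(C_j)^\times$. Take residues at $P$ on $C_j$; there $\pi_i$ is a uniformizer by transversality and $\gamma_j$ is unramified. This gives $\overline{\gamma_i}=v_P(t)\,\overline{\gamma_j}$. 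The same argument on $C_i$ gives $\overline{\gamma_j}\in\{0,\overline{\gamma_i}\}$, whence $\overline{\gamma_i}=\overline{\gamma_j}$.

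The second gap is the converse: you never show that each $\zeta\in H$ yields an unramified class. ``Check that further blow-ups give no new conditions'' is not an argument. Testing particular divisorial valuations on blow-ups of $S$ does not cover every discrete valuation of $k(X)$ centred over a closed point.

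The paper treats all valuations centred at a given $M\in S$ at once (Lemmas \ref{cod1}, \ref{cod2i}, \ref{cod2ij}):
\begin{itemize}
\item If $M$ lies on at most one $C_i$, or $m_i=m_j$, then by purity (Theorem \ref{pur}) $\alpha_\zeta$ or $\alpha_\zeta-\alpha$ lies in $H^2_{\acute{e}t}(\mathcal{O}_{S,M},\Z/2\Z)$. By condition (1), this covers the nodes with $\partial^1_P(\gamma_i)\neq0$.
\item If $m_i=1\neq m_j$, one writes $\alpha_\zeta=\alpha'_\zeta+(u,\pi_i)$ with $u(M)$ a square. Then $(u,\pi_i)$ dies over the fraction field of $\widehat{\mathcal{O}}_{S,M}$, through which every such residue factors.
\end{itemize}

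Finally, the Picard contribution you flag in the last step does not arise. $S$ is $k$-rational, so $k(S)$ is purely transcendental over $k$ and $H^2_{nr}(k(S)/k,\Z/2\Z)=H^2(k,\Z/2\Z)$ directly.
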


{\bf Acknowledgments.}
The author was partially supported by NSF grant DMS-2201195. The author is very grateful of Alena Pirutka's thorough guidance throughout this project, and would also like to thank Fedor Bogomolov and Zhijia Zhang for helpful discussions and suggestions.

\section{Reminders and local computations}
\subsection*{Notations}
Let $k$ be a field, and let $n>0$ be an integer invertible in $k$.  Let $\zeta_{n}$ be a primitive $n^{th}$ root of unity.  Denote by $\mu_n$ the \'{e}tale $k$-group scheme of the $n^{th}$ roots of unity over $k$. If $\zeta_n\in k$, one has  isomorphisms 
$\mu_n^{\otimes j}\cong\Z/n\Z
$
for all $j$. Denote by $H^i(k,\mu_n^{\otimes j})$ the Galois cohomology groups.

\subsection{Unramified cohomology}

Let $A$ be a discrete valuation ring, $K$ be the field of fractions of $A$, and $\kappa(v)$ be the residue field. Assume  $(n, char(\kappa(v)))=1$.  One has the residue maps (see e.g. \cite{CT92})
$$
\partial_v^i: H^i(K,\mu_n^{\otimes j})\rightarrow H^{i-1}(\kappa(v),\mu_n^{\otimes(j-1)}). 
$$

\medskip

Given an integral algebraic variety $X$ over $k$, the {\bf unramified cohomology groups} of $X$ are defined as (see {\it loc. cit.})

$$
H^i_{nr}(k(X)/k,\mu_n^{\otimes j})\coloneqq \bigcap_v \operatorname{Ker}[H^i(k(X),\mu_n^{\otimes j})\xrightarrow{\partial^i_v} H^{i-1}(\kappa(v),\mu_n^{\otimes (j-1)})]
$$
where the intersection runs over all discrete valuations $v$ on $k(X)$ of rank $1$, trivial on $k$.

\subsection{Fibrations}

Let $B$ be a smooth projective integral variety over $k$, and let $F=k(B)$ be its function field.
Assume  that $X$ is an integral projective variety, and that one has a dominant map $\pi: X\rightarrow B$  with generic fiber $Q$ a quadric over $F$.

Recall that for a valuation $v$ on $k(X)$ with valuation ring $A\subset k(X)$, the center $M$ of $v$ on $B$ is defined as the image of the closed point of $Spec\ A$ in $B$ under the map $Spec\ A\rightarrow X\rightarrow B$. This induces an injective homomorphism 
$$\varphi:\mathcal{O}_{B,M}\rightarrow A$$ 
such that $\varphi(\mathfrak{m}_{\mathcal{O}_{B,M}})\subset \mathfrak{m}_A$, where $\mathfrak{m}_A$ is the maximal ideal of $A$.
This further induces a field extension 
$\phi:\kappa(M)\hookrightarrow\kappa(v).$ One has the following commutative diagram:
\begin{equation}\label{center}
\begin{tikzcd}
	A^\times && {\kappa(v)^\times} \\
	\\	{\mathcal{O}_{B,M}^\times} && {\kappa(M)^\times}
	\arrow[ from=1-1, to=1-3]
	\arrow["\varphi", hook, from=3-1, to=1-1]
	\arrow[ from=3-1, to=3-3]
	\arrow["\phi"', hook', from=3-3, to=1-3]
\end{tikzcd}
\end{equation}

Given a  point $x\in B$ of positive codimension, let $F_x$ be the field of fractions of the completed local ring $\widehat{\mathcal{O}}_{B,x}$. We also have the notion of relative unramified cohomology $H^i_{nr,\pi}(k(X)/k,\mu_n^{\otimes j})\subset H^i(k(X),\mu_n^{\otimes j})$:

\medskip

\begin{align*}
   & H^i_{nr,\pi}(k(X)/k,\mu_n^{\otimes j})\coloneqq\\ &\operatorname{Im}[H^i(F,\mu_n^{\otimes j})\rightarrow H^i(k(X),\mu_n^{\otimes j})]\bigcap \cap_x\operatorname{Ker}[H^i(k(X),\mu_n^{\otimes j})\rightarrow H^i(F_x(Q),\mu_n^{\otimes j})], 
\end{align*}
where $x$ runs over all scheme points of $B$ of positive codimension (\cite{P23}). 

\bigskip

Note that one has (\cite[Remark 2.2]{P23}):
$$
H^i_{nr,\pi}(k(X)/k,\mu_n^{\otimes j})\subset H^i_{nr}(k(X)/k,\mu_n^{\otimes j}).
$$

We will also use the fact that the following diagram is commutative:
\begin{equation}\label{fac}\begin{tikzcd}
	{H^i(F(Q),\mu_n^{\otimes j})} && {H^i(F_x(Q),\mu_n^{\otimes j})} \\
	\\
	{H^i(F,\mu_n^{\otimes j})} && {H^i(F_x,\mu_n^{\otimes j})}
	\arrow["{\operatorname{res}_{F(Q)/F_x(Q)}}", from=1-1, to=1-3]
	\arrow["{\operatorname{res}_{F/F(Q)}}", from=3-1, to=1-1]
	\arrow["{\operatorname{res}_{F/F_x}}"', from=3-1, to=3-3]
	\arrow["{\operatorname{res}_{F_x/F_x(Q)}}"', from=3-3, to=1-3]
\end{tikzcd}
\end{equation}

\subsection{Fibrations in quadrics}

\begin{thm}(\cite{OVV})\label{ovvker}
    Let $F$ be a field of characteristic $char(F)\neq 2$, and let $q$ be a Pfister neighbor of the Pfister form $\langle \langle a_1,\ldots,a_s\rangle\rangle$ with $a_i\in F^*$. Let $f: Q\rightarrow \operatorname{Spec} F$ be the projective quadric associated to $q$. Then the kernel of the map
    $$
    f^*:H^s(F,\Z/2\Z)\rightarrow H^s(F(Q),\Z/2\Z)
    $$
    is generated by $(a_1,\ldots,a_s)$.
\end{thm}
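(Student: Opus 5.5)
This is the theorem of Orlov--Vishik--Voevodsky, cited by the paper as \cite{OVV}. A self-contained proof is out of reach here, so I will outline the strategy of the original argument. It rests on the Milnor conjecture (Voevodsky's norm residue isomorphism for $\ell=2$) and on motivic cohomology of Rost motives. Via the norm residue isomorphism $K^M_s(F)/2\cong H^s(F,\Z/2\Z)$, the statement becomes: the kernel of $K^M_s(F)/2\to K^M_s(F(Q))/2$ is generated by the symbol $\{a_1,\ldots,a_s\}$.

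The first reductions are as follows.

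\begin{itemize}
\item The symbol dies over $F(Q)$. The Pfister form $\langle\langle a_1,\ldots,a_s\rangle\rangle$ becomes isotropic over $F(Q)$, because its neighbor $q$ does. An isotropic Pfister form is hyperbolic, so $(a_1,\ldots,a_s)$ vanishes in $H^s(F(Q),\Z/2\Z)$.
\item We may take $Q$ to be the Pfister quadric itself. If $q$ is a neighbor of a Pfister form $\phi$, then $q$ is isotropic over $F(Q')$ exactly when $\phi$ is, where $Q'$ is the quadric of $\phi$. Hence $F(Q)$ and $F(Q')$ are stably equivalent: each function field makes the other quadric isotropic, so each compositum is purely transcendental over the other field. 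Kernels of restriction to function fields are insensitive to purely transcendental extensions. So it suffices to treat the projective quadric of $\phi$, or equivalently the norm quadric (the Pfister neighbor of dimension $2^{s-1}+1$).
\end{itemize}

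The main step is the reverse inclusion: every $u\in\ker f^*$ is a multiple of the symbol. The tool is the Rost motive $M_\alpha$, a direct summand of the motive of the norm quadric $Q$. One uses the exact triangle
$$\mathbb Z/2(2^{s-1}-1)[2^s-2]\to M_\alpha\to \mathbb Z/2\to$$
over $F(Q)$, together with its twisted form over $F$, which is built from the simplicial scheme $\mathcal X_Q$.

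Compare motivic cohomology of $\operatorname{Spec}F$ with that of $\mathcal X_Q$. Their difference in weight $s$ is controlled by the reduced cohomology $\tilde H^{*,*}(\mathcal X_Q)$. The kernel of $K^M_s(F)/2\to K^M_s(F(Q))/2$ is then identified with the image of a specific class in $H^{s,s-1}(\mathcal X_Q)$. Concretely, this is the image under the Milnor operations $Q_i$ of the basic class $\mu\in H^{s+1,s}(\mathcal X_Q)$ attached to the symbol. That image is generated by the symbol itself.

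The main obstacle is the vanishing statement. One needs to show that $H^{s,s-1}$-type groups of $\mathcal X_Q$ contribute nothing beyond $\Z/2\cdot(a_1,\ldots,a_s)$. This requires the Beilinson--Lichtenbaum comparison (a consequence of the Milnor conjecture in weights $<s$) to identify motivic and étale cohomology of $\mathcal X_Q$ in the relevant range. It also requires the injectivity of the Milnor operations on $\tilde H^{*,*}(\mathcal X_Q)$.

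In the paper I would simply quote \cite{OVV} here. I would note only the low-degree checks: $s=1$ is Kummer theory, and $s=2$ is Amitsur's theorem for conics, where $\ker(Br(F)\to Br(F(Q)))$ is generated by the quaternion class.
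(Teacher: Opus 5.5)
Your proposal is correct and matches the paper's proof in substance: both reduce from the Pfister neighbor $q$ to the Pfister form $\langle\langle a_1,\ldots,a_s\rangle\rangle$ by stable birational equivalence of the two quadrics over $F$, and then quote \cite{OVV}, so your sketch of the motivic argument is background rather than part of the proof. Your version of the reduction uses two facts: the compositum $F(Q\times Q')$ is purely transcendental over both $F(Q)$ and $F(Q')$, and $H^s$ injects under purely transcendental extensions. This is a slightly more explicit form of the paper's appeal to \cite[Lemma 12]{S19A} and unramified cohomology, and it shows the compatibility with the maps from $H^s(F,\Z/2\Z)$ that the paper's isomorphism $H^s_{nr}(F(Q),\Z/2\Z)\cong H^s_{nr}(F(Q'),\Z/2\Z)$ leaves implicit.
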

\begin{proof}
    As the image of  $f^*$ lands in  $H^s_{nr}(F(Q),\Z/2\Z)\subset H^s(F(Q),\Z/2\Z)$, one has
    $$
    \operatorname{Ker}(f^{*})=\operatorname{Ker}[H^s(F,\Z/2\Z)\rightarrow H^s_{nr}(F(Q),\Z/2\Z)].
    $$
    
    By \cite[Lemma 12]{S19A}, a Pfister neighbor $q$ of a Pfister form $q'$ is stably birational to $q$. As a result, if $g:Q'\to \operatorname{Spec} k$ is the projective quadric associated to $q'=\langle \langle a_1,\ldots,a_s\rangle\rangle$ and
    $$
    g^*:H^s(F,\Z/2\Z)\rightarrow H^s(F(Q'),\Z/2\Z)
    $$
    is the induced map, then
    $$
    H^s_{nr}(F(Q),\Z/2\Z)\cong H^s_{nr}(F(Q'),\Z/2\Z),
    $$
    and
    $\operatorname{Ker} f^{*}=\operatorname{Ker} g^{*}.
    $

    By the result of Orlov-Vishik-Voevodsky in \cite{OVV} (see also \cite[Corollary 40.9]{EKM} and its proof),
    $$
    \operatorname{Ker} g^{*}=(a_1,\ldots,a_s).$$
    
    \vspace{-0.7cm}
\end{proof}

Let $k$ be a field of characteristic $char(k)\neq 2$ and let  
\begin{equation}\label{defX}
\pi: X\rightarrow \Pn{n}
\end{equation} be a fibration in quadrics such that its generic fiber $Q$ over $F=k(\Pn{n})$ is a quadric in $2^{d+n-1}+1$ variables which is a Pfister neighbor of the Pfister form $\langle \langle a_1,\ldots,a_d,f_1,\ldots,f_{n-1},g_1g_2\rangle \rangle$, where $a_i\in k^\times\backslash k^{\times2}$, and $f_i,g_1,g_2\in F^{*}$. We will use the following notations for the following three elements:
\begin{align*}
\alpha_{12}&=\left(a_1,\ldots,a_d,f_1,\ldots,f_{n-1},g_1g_2\right)\in H^{d+n}(F, \Z/2\Z),\\
\alpha_1&=\left(a_1,\ldots,a_d,f_1,\ldots,f_{n-1},g_1\right)\in H^{d+n}(F, \Z/2\Z),\\
\alpha_2&=\left(a_1,\ldots,a_d,f_1,\ldots,f_{n-1},g_2\right)\in H^{d+n}(F, \Z/2\Z).
\end{align*}
When it is clear from the context, we will also use the same notations for their images in $H^{d+n}(F(Q),\Z/2\Z)$.

\begin{prop}\label{ker} In the above notations, the kernel of the map $$H^{d+n}(F, \Z/2\Z) \rightarrow H^{d+n}(F(Q), \Z/2\Z)$$ is $\Z/2\Z$, generated by $\alpha_{12}$.
\end{prop}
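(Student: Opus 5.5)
This is a direct application of Theorem \ref{ovvker} with $s=d+n$. The generic fiber $Q$ is, by hypothesis, the projective quadric attached to a Pfister neighbor of
$$\langle\langle a_1,\ldots,a_d,f_1,\ldots,f_{n-1},g_1g_2\rangle\rangle,$$
a Pfister form in $d+(n-1)+1=d+n$ slots. The form has $2^{d+n}$ variables, and a neighbor has more than half that number, so $2^{d+n-1}+1$ variables is the correct count. Also $F=k(\Pn{n})$ has characteristic different from $2$ because $k$ does. Theorem \ref{ovvker} therefore says that the kernel of $H^{d+n}(F,\Z/2\Z)\to H^{d+n}(F(Q),\Z/2\Z)$ is the subgroup generated by the symbol $(a_1,\ldots,a_d,f_1,\ldots,f_{n-1},g_1g_2)=\alpha_{12}$.

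It remains to see that this subgroup is $\Z/2\Z$ and not trivial. It is $2$-torsion, since $H^{d+n}(F,\Z/2\Z)$ is. The only real point is to check that $\alpha_{12}\neq0$ in $H^{d+n}(F,\Z/2\Z)$. By the Milnor conjecture (Orlov–Vishik–Voevodsky), a symbol vanishes exactly when the corresponding Pfister form is hyperbolic, and for a Pfister form this is the same as being isotropic. A Pfister neighbor is isotropic exactly when its Pfister form is. So $\alpha_{12}=0$ would force the quadric $Q$ to have an $F$-point, i.e. the fibration $\pi$ would have a rational section.

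I would handle this in either of two ways. The first is to treat nonvanishing of $\alpha_{12}$ as an implicit standing assumption on the choice of $a_i,f_i,g_j$; the later constructions in the paper will choose these so that $\alpha_{12}\neq0$, and with it "the kernel is $\Z/2\Z$" is immediate. The second is to verify it directly by taking iterated residues. Take the residue of $\alpha_{12}$ along a component of $\{g_1=0\}$ where $g_2$ is a unit, then successively along the divisors of the $f_i$. This produces $(a_1,\ldots,a_d)$ restricted to some residue field, and it has to be nonzero there. That in turn requires $k$, or the relevant residue field, to have enough cohomology, e.g. $(a_1,\ldots,a_d)\neq0$ in $H^d(k,\Z/2\Z)$ with $k$ of cohomological dimension $d$.

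The main obstacle is this nonvanishing. The kernel statement itself is entirely covered by Theorem \ref{ovvker}. In the write-up I would state explicitly that $\alpha_{12}\neq 0$ under the running hypotheses, via the residue computation above, and conclude that the kernel is $\{0,\alpha_{12}\}\cong\Z/2\Z$.
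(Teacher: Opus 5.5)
Your proposal is correct and matches the paper, whose entire proof is the appeal to Theorem \ref{ovvker} with $s=d+n$. Your caveat about nonvanishing is also well taken. $\alpha_{12}\neq 0$ does not follow from the setup (\ref{defX}); it fails, e.g., when $g_1g_2\in F^{\times 2}$, so the paper tacitly assumes it. It does hold in the configuration of Theorem \ref{config}, by exactly your residue computation at $\gamma_1$. In any case, the later arguments (Proposition \ref{nr} and the local application over $F_x$) only use that the kernel is the subgroup generated by $\alpha_{12}$, not that $\alpha_{12}$ is nonzero.
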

\begin{proof}
    This follows from Theorem \ref{ovvker}.
\end{proof}

\subsection{Local results}
\

\noindent 
In this section we recall two results from \cite{CTO89}.
\begin{prop}\label{CTO}
    Let $B$ be a discrete valuation ring, with fraction field $K$ and residue field $\kappa(B)$, assume $char(\kappa(B))\neq 2$. Let $s\geq1$ be an integer. Let $\alpha\in H^s_{\acute{e}t}(B,\Z/2\Z)$ and let $\alpha_0\in H^s(\kappa(B),\Z/2\Z)$ be the image of $\alpha$ by the reduction map. Let $b\in K^\times$ be of valuation $m$ in $B$ and let $\beta$ be the class of $b$ in $H^1(K,\Z/2\Z)$. Then
        $$
        \partial_B^{s+1}(\alpha\cup \beta)=m\alpha_0.
        $$
\end{prop}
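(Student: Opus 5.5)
The plan is to reduce the formula to a cup-product computation in étale cohomology of $B$ and of its closed point. Let $\pi$ be a uniformizer of $B$ and write $b = u\pi^m$ with $u\in B^\times$. In $H^1(K,\Z/2\Z)=K^\times/K^{\times 2}$ this gives $\beta = (u) + m(\pi)$. Bilinearity of the cup product then gives
$$\alpha\cup\beta = \alpha\cup(u) + m\,\alpha\cup(\pi),$$
and the argument treats the two terms separately.

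For the first term, both $\alpha$ and $(u)$ come from $H^*_{\acute{e}t}(B,\Z/2\Z)$: the class $(u)$ is the Kummer class of a unit, hence unramified. Their cup product therefore lies in the image of $H^{s+1}_{\acute{e}t}(B,\Z/2\Z)\to H^{s+1}(K,\Z/2\Z)$. The residue map vanishes on this image; this follows from the localization (Gysin) exact sequence
$$H^{s+1}_{\acute{e}t}(B)\to H^{s+1}(K)\xrightarrow{\partial} H^{s}(\kappa(B)),$$
which is valid because $2$ is invertible on $B$ by absolute purity for a DVR. So $\partial^{s+1}_B(\alpha\cup(u))=0$.

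For the second term, the key fact is that for $\alpha\in H^s_{\acute{e}t}(B)$ one has $\partial(\alpha\cup(\pi))=\alpha_0$, with signs irrelevant mod $2$. I would deduce this from the standard description of the residue via the Gysin sequence: the residue is a map of $H^*_{\acute{e}t}(B)$-modules, with $H^*_{\acute{e}t}(B)$ acting on $H^*(\kappa(B))$ through reduction, and $\partial((\pi))=1\in H^0(\kappa(B),\Z/2\Z)$. Alternatively one can pass to the completion $\widehat B$, where $H^s_{\acute{e}t}(\widehat B)\cong H^s(\kappa(B))$ by Hensel and proper base change. Then $\alpha$ is inflated from the unramified quotient, and the classical Serre formula $\partial(\alpha_{nr}\cup(\pi))=\alpha_0$ for unramified classes applies. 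Residues are compatible with completion, so the formula descends to $B$. Combining the two terms gives $\partial^{s+1}_B(\alpha\cup\beta)=m\alpha_0$.

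The main obstacle is justifying the module-linearity of the residue, or equivalently the formula $\partial(\alpha\cup(\pi))=\alpha_0$. I would handle it via the completion/henselization route, where it reduces to the explicit residue description for $K^{h}$ as an extension by the inertia, so that all that remains is bookkeeping of the compatibility of residues with henselization. Since the paper cites this result from \cite{CTO89}, it is also acceptable simply to invoke that reference for this step.
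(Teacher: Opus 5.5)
Your proof is correct. The paper gives no argument of its own here and simply cites \cite[Proposition 1.3]{CTO89}. Your reduction is the standard proof of this formula, so nothing is missing: you write $b=u\pi^m$, observe that $\alpha\cup(u)$ comes from $H^{s+1}_{\acute{e}t}(B,\Z/2\Z)$ and therefore has zero residue, and obtain $\partial_B^{s+1}(\alpha\cup(\pi))=\alpha_0$ either from the $H^*_{\acute{e}t}(B)$-linearity of the residue together with $\partial^1_B((\pi))=1$, or from Serre's formula over the completion $\widehat{K}$. What it adds over the bare citation is that it shows exactly where $\operatorname{char}\kappa(B)\neq 2$ enters, namely Kummer theory on $B$ and purity for the DVR; and, as you note, with $\Z/2\Z$ coefficients all sign conventions for $\partial$ and for the order of the cup product are irrelevant.
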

\begin{proof}
    See \cite[Proposition 1.3]{CTO89}.
\end{proof}
\begin{rem}
    Let $\alpha=(a_1,\ldots,a_s)\in H^s_{\acute{e}t}(B,\Z/2\Z)$ with $a_1,\ldots, a_s$ invertible in $B$, then $\alpha_0=(\overline{a_1},\ldots,\overline{a_s})$, where $\overline{a_i}$ is the image of $a_i$ under the reduction map $B^\times\to\kappa(B)$.
\end{rem}
\begin{rem}
    Similarly, if $\beta\in H^r(K,\Z/2\Z),$ then $\partial_B^{s+r}(\alpha\cup \beta)=\alpha_0\cup\partial_B^r(\beta).$
\end{rem}

\medskip

\begin{prop}\label{nr}
    Let $X$ be as in (\ref{defX}). Suppose the following two conditions are satisfied:
    \begin{itemize}
        \item[(i)] There exist discrete valuation rings $B_1$ and $B_2$ with field of fractions  $F$, such that 
        $$
        \partial_{B_1}^{d+n}(\alpha_1)\neq 0 \quad \mbox{and}\quad \partial_{B_2}^{d+n}(\alpha_2)\neq 0.
        $$

        \item[(ii)] For every discrete valuation ring $B$ with field of fractions $F$,
        $$
        \partial_B^{d+n}(\alpha_1)=0\quad\mbox{or}\quad\partial_B^{d+n}(\alpha_2)=0.
        $$
    \end{itemize}
    Then the image of $\alpha_1$ in $H^{d+n}(F(Q), \Z/2\Z)$ is nonzero, and it is in the unramified cohomology group $H^{d+n}_{nr}(k(X)/k, \Z/2\Z).$
\end{prop}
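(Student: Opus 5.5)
Nonvanishing comes first. By Proposition \ref{ker}, the kernel of $H^{d+n}(F,\Z/2\Z)\to H^{d+n}(F(Q),\Z/2\Z)$ is $\{0,\alpha_{12}\}$, so it suffices to show $\alpha_1\neq 0$ and $\alpha_1\neq\alpha_{12}$ in $H^{d+n}(F,\Z/2\Z)$. Condition (i) gives $\partial^{d+n}_{B_1}(\alpha_1)\neq 0$, hence $\alpha_1\neq 0$. Since $(g_1g_2)=(g_1)+(g_2)$ in $H^1$, multilinearity of the cup product gives $\alpha_{12}=\alpha_1+\alpha_2$. Thus $\alpha_1=\alpha_{12}$ would force $\alpha_2=0$, contradicting $\partial^{d+n}_{B_2}(\alpha_2)\neq 0$. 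So the image of $\alpha_1$ in $H^{d+n}(F(Q),\Z/2\Z)$ is nonzero.

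For unramifiedness, I will show that $\alpha_1$ lies in the relative unramified group $H^{d+n}_{nr,\pi}(k(X)/k,\Z/2\Z)$ and then apply the inclusion $H^{d+n}_{nr,\pi}\subset H^{d+n}_{nr}$ from \cite[Remark 2.2]{P23}. The element $\alpha_1$ is already the image of an element of $H^{d+n}(F,\Z/2\Z)$. By the commutative diagram (\ref{fac}), it then suffices to show, for every scheme point $x\in\Pn{n}$ of positive codimension, that the image of $\alpha_1$ in $H^{d+n}(F_x,\Z/2\Z)$ is either $0$ or equal to the image of $\alpha_{12}$. In the second case its image in $H^{d+n}(F_x(Q),\Z/2\Z)$ vanishes because $\alpha_{12}$, the class of the Pfister form, dies on $F_x(Q)$ (apply Proposition \ref{ker} over $F_x$, or simply note that $Q$ acquires a rational point over $F_x(Q)$). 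Since $\alpha_{12}=\alpha_1+\alpha_2$, the dichotomy is exactly: $\alpha_1\mapsto 0$ or $\alpha_2\mapsto 0$ in $H^{d+n}(F_x,\Z/2\Z)$.

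The main obstacle is passing from condition (ii), which concerns residues at discrete valuation rings of $F$, to vanishing over the complete fields $F_x$ at points $x$ of arbitrary codimension. For codimension-one points, $F_x$ is a complete discretely valued field with residue field $\kappa(x)$ of characteristic $\neq 2$. There $H^{d+n}(F_x)\cong H^{d+n}(\kappa(x))\oplus H^{d+n-1}(\kappa(x))$, and a class is determined by its residue together with its specialization. I would use (ii) to get that one of the $\alpha_i$, say $\alpha_1$, is unramified at $x$. If $\alpha_1$ is unramified but its specialization is nonzero, it need not vanish in $F_x$; so I would instead aim for the weaker statement that $\alpha_1$ or $\alpha_2$ dies over $F_x(Q)$. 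This holds if $\alpha_{12}$ and $\alpha_1$ agree modulo classes killed by $Q_{F_x}$, for instance when $\alpha_2$ is unramified at $x$ with specialization vanishing or killed by the reduced quadric. I expect to handle this by the standard Colliot-Thélène--Ojanguren analysis of \cite{CTO89}, using Proposition \ref{CTO} to compute residues of the explicit symbols.

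For higher-codimension points, I would choose a regular system of parameters and reduce, via the regular local ring $\widehat{\mathcal O}_{B,x}$ and purity, to the codimension-one valuations dominating it, where (ii) applies. This is where the real work lies, and I would follow \cite{P23} and \cite{CTO89} closely.
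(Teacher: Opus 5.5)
Your nonvanishing argument is correct and is the same as the paper's. The gap is in the unramified part. You set out to prove the strictly stronger statement $\alpha_1\in H^{d+n}_{nr,\pi}(k(X)/k,\Z/2\Z)$. By your own reduction, this amounts to: $\alpha_1$ or $\alpha_2$ vanishes in $H^{d+n}(F_x,\Z/2\Z)$ for every $x$ of positive codimension. That dichotomy is also necessary, since the kernel of $H^{d+n}(F_x,\Z/2\Z)\to H^{d+n}(F_x(Q),\Z/2\Z)$ is contained in $\{0,\alpha_{12}\}$.

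Hypothesis (ii) only controls residues. At a codimension-one $x$, an unramified class becomes, over $F_x$, the lift of its specialization in $H^{d+n}(\kappa(x),\Z/2\Z)$, and nothing forces that specialization to vanish. In fact the stronger claim fails under the hypotheses of the proposition. Take $k=\mathbb{R}$, $d=n=1$, $a_1=-1$, $g_1=t$, $g_2=-(t-1)(t-2)$, and $Q\colon x_0^2+x_1^2-g_1g_2x_2^2=0$ over $F=\mathbb{R}(t)$, a Pfister neighbor of $\langle\langle -1,g_1g_2\rangle\rangle$ in $3$ variables.
\begin{itemize}
\item Condition (i) holds at $t=0$ and at $t=1$.
\item Condition (ii) holds: at $t=\infty$ only $g_1$ has odd valuation, and at non-real closed points $H^1(\C,\Z/2\Z)=0$.
\item At $x=\{t=-1\}$, both $g_1$ and $g_2$ are units with negative value, so $\alpha_1=\alpha_2=(-1,-1)\neq 0$ in $H^2(F_x,\Z/2\Z)$.
\item Since $g_1g_2(-1)=6>0$, $Q$ is isotropic over $F_x$, so $H^2(F_x,\Z/2\Z)\to H^2(F_x(Q),\Z/2\Z)$ is injective.
\end{itemize}
Hence $\alpha_1\notin H^2_{nr,\pi}$, even though the proposition itself still holds here. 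So no appeal to \cite{CTO89} or \cite{P23} can complete your route. Vanishing over $F_x$ is only available under a cohomological-dimension bound, which is why Theorem \ref{config}, and not Proposition \ref{nr}, asserts membership in $H_{nr,\pi}$. Your higher-codimension step has the same defect, and two more: purity (Theorem \ref{pur}) only gives a class in $H_{\acute{e}t}$ of the local ring, not its vanishing; and (ii) does not make the same $\alpha_i$ unramified along all divisors through $x$.

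The missing idea is to argue valuation by valuation on $k(X)=F(Q)$ rather than point by point on the base. Let $A$ be a discrete valuation ring of $F(Q)$, trivial on $k$.
\begin{itemize}
\item If $F\subset A$, all entries of $\alpha_1$ are units of $A$, so $\partial_A(\alpha_1)=0$.
\item Otherwise $B=A\cap F$ is a discrete valuation ring of $F$, and $\partial_A\circ\operatorname{res}_{F/F(Q)}=e_{B/A}\operatorname{res}_{\kappa(B)/\kappa(A)}\circ\partial_B$. By (ii) this gives $\partial_A(\alpha_1)=0$ or $\partial_A(\alpha_2)=0$.
\item In the second case, $\partial_A(\alpha_1)=\partial_A(\alpha_{12})-\partial_A(\alpha_2)=0$, because $\alpha_{12}$ already vanishes in $H^{d+n}(F(Q),\Z/2\Z)$.
\end{itemize}
You only need the residue of $\alpha_2$ at $A$ to vanish, not $\alpha_2$ itself to vanish over a completion.
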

\begin{proof} This follows from \cite[Proposition 2.1]{CTO89}: we recall the proof for convenience.
    By proposition $\ref{ker}$, $$
    \operatorname{ker}\left[H^{d+n}(F, \Z/2\Z) \rightarrow H^{d+n}(F(Q), \Z/2\Z)\right]=\langle\alpha_{12}\rangle.
    $$
    If $\alpha_1$ is also in the kernel, then we have 
    $$\alpha_1=0\mbox{ in }H^{d+n}(F, \Z/2\Z)\quad\mbox{or}\quad\alpha_2=\alpha_{12}-\alpha_1=0\mbox{ in }H^{d+n}(F, \Z/2\Z).$$ 
    This contradicts condition $(i)$.

    Next, we show that for any discrete valuation ring $A$ on $F(Q)$, 
    $$\partial_A^{d+n}(\alpha_1)=0.
    $$ 
    Indeed, if $A$ contains $F$, then the image of $\alpha_1$ comes from an element of $H^{d+n}_{\acute{e}t}(A,\Z/2\Z)$ and hence has residue $0$. If $A$ does not contain $F$, take 
    $$
    B=A\cap F,
    $$ 
    which is a discrete valuation ring with field of fractions $F$. We have that at least one of the residues $\partial_B^{d+n}(\alpha_1)$ and $\partial_B^{d+n}(\alpha_2)$ vanishes by assumption.
    
    Let $e_{B/A}$ be the valuation of the uniformizer $\pi_B$ of $B$ in $A$. We have the following commutative diagram 
\[\begin{tikzcd}
	{H^{d+n}(F(Q), \mathbb{Z} / 2\Z)} && {H^{d+n-1}(\kappa(A), \mathbb{Z} / 2\Z)} \\
	\\
	{H^{d+n}(F, \mathbb{Z} / 2\Z)} && {H^{d+n-1}(\kappa(B), \mathbb{Z} / 2\Z)}
	\arrow["{\partial_A^{d+n}}", from=1-1, to=1-3]
	\arrow["{\operatorname{res}_{F/F(Q)}}", from=3-1, to=1-1]
	\arrow["{\partial_B^{d+n}}"', from=3-1, to=3-3]
	\arrow["{e_{B/A}\operatorname{res}_{\kappa(B)/\kappa(A)}}"', from=3-3, to=1-3]
\end{tikzcd}\]

         Assume $\partial_B^{d+n}(\alpha_1)=0$, then $
        \partial_A^{d+n}(\alpha_1)=0
        $ by the commutative diagram.
        
        Assume now $\partial_B^{d+n}(\alpha_2)=0$, then   $
        \partial_A^{d+n}(\alpha_2)=0
        $ from the above diagram again.
        Since $\alpha_{12}=0$ in $H^{d+n}(F(Q),\Z/2\Z)$, we have 
        $$
        \partial_A^{d+n}(\alpha_{12})=0
        $$ 
        and 
        $$
            \partial_A^{d+n}(\alpha_1)=\partial_A^{d+n}(\alpha_{12}-\alpha_2)
            =\partial_A^{d+n}(\alpha_{12})-\partial_A^{d+n}(\alpha_2)
        =0.
        $$
\end{proof}

\section{Case of bounded cohomological dimension}
In this section, we first give two auxiliary results for computing residues of the symbol $(a_1,\ldots,a_s)$ when $k$ is a field of cohomological dimension $\leq d$. Then, we give a configuration for constructing examples of quadric fibrations with nontrivial unramified cohomology groups $H^{d+n}_{nr}(k(X)/k, \Z/2\Z)$. Finally, we provide some explicit examples constructed this way.
\subsection{Computing residues}

\begin{prop}\label{cod} Let $k$ be a field of cohomological dimension $\leq d$, and assume $char (k)\neq2$. 
    Let $B$ be a discrete valuation ring with field of fractions $k(\Pn{n})=F$, centered at a point $M\in\Pn{n}$ of codimension $c$. Let 
     $$(a_1,\ldots,a_s)\in H^s(F,\Z/2\Z).$$
    Assume that:
    \begin{itemize}
    \item[(i)] $s\geq d+n-c+1$;
    \item [(ii)]  up to multiplication by a square, at least $d+n-c+1$ of the entries $\{a_i\}$ are invertible in $\mathcal{O}_{\Pn{n},M}.$  
    \end{itemize}
    Then $$\partial^s_B(a_1,\ldots,a_s)=0.$$
\end{prop}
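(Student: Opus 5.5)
Let $m=d+n-c+1$. After reordering and multiplying by squares (which does not change the symbol), we may assume $a_1,\ldots,a_m\in\mathcal{O}_{\Pn{n},M}^\times$; write $s=m+r$ with $r\ge 0$. The plan is to factor the symbol as
$$(a_1,\ldots,a_s)=\alpha\cup\beta,\qquad \alpha=(a_1,\ldots,a_m),\quad \beta=(a_{m+1},\ldots,a_s),$$
and to show that the residue vanishes because $\alpha$ becomes zero in the right place.

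First we identify where $\alpha$ lives. Since $\varphi:\mathcal{O}_{\Pn{n},M}\hookrightarrow B$ is local, the units $a_1,\ldots,a_m$ are units of $B$. Hence $\alpha$ comes from $H^m_{\acute{e}t}(B,\Z/2\Z)$, and its reduction is $\alpha_0=(\overline{a_1},\ldots,\overline{a_m})\in H^m(\kappa(B),\Z/2\Z)$. By the remark after Proposition \ref{CTO}, $\partial_B^s(\alpha\cup\beta)=\alpha_0\cup\partial^r_B(\beta)$; when $r=0$ the residue of an unramified class is $0$ and there is nothing to prove. So it suffices to show $\alpha_0=0$.

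The key observation is that $\alpha_0$ is the image, under $\phi:\kappa(M)\hookrightarrow\kappa(B)$ from diagram (\ref{center}), of the class $(\overline{a_1},\ldots,\overline{a_m})\in H^m(\kappa(M),\Z/2\Z)$, where now the bars denote reduction in $\kappa(M)$. The residue field $\kappa(M)$ is a finitely generated extension of $k$ of transcendence degree $n-c$, because $M$ has codimension $c$ in $\Pn{n}$. By the standard bound (Serre, Galois cohomology, II.4.2), $\operatorname{cd}_2(\kappa(M))\le \operatorname{cd}_2(k)+\operatorname{trdeg}\le d+n-c<m$. Therefore $H^m(\kappa(M),\Z/2\Z)=0$, so $\alpha_0=0$ and $\partial^s_B(a_1,\ldots,a_s)=0$.

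The main point requiring care is the compatibility of reduction with the inclusion of residue fields. The class $\alpha$ already comes from $H^m_{\acute{e}t}(\mathcal{O}_{\Pn{n},M},\Z/2\Z)$, and reduction commutes with the ring map $\mathcal{O}_{\Pn{n},M}\to B$ by functoriality of étale cohomology together with diagram (\ref{center}). The other point is the use of the cohomological dimension bound for finitely generated extensions, which I would quote rather than reprove. If $r>0$, the remark applies with $\beta\in H^r(K,\Z/2\Z)$, which is immediate.
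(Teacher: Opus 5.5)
Your proposal is correct and follows essentially the same argument as the paper. You split off the $d+n-c+1$ unit entries, use the remark after Proposition \ref{CTO} to write the residue as $\alpha_0\cup\partial_B(\beta)$ (the case with no remaining entries being unramified), and kill $\alpha_0$ because it is restricted from $H^{d+n-c+1}(\kappa(M),\Z/2\Z)=0$; your explicit justification $\operatorname{cd}_2(\kappa(M))\le d+(n-c)$ via transcendence degree fills in a step the paper only asserts.
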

\begin{proof}
    If $s=d+n-c+1$, then by condition {\it(ii)} every entry is invertible up to a square in $\mathcal{O}_{\Pn{n},M}$, and the claim is true since $(a_1,\ldots,a_s)\in H^s_{\acute{e}t}(B,\Z/2\Z)$. Assume $s\geq d+n-c+2$. Without loss of generality, assume that the first $d+n-c+1$ entries are invertible. We use proposition \ref{CTO} (and remarks after): 
    \begin{multline*}
            \partial_B^s(a_1,\ldots,a_s)=\\(\overline{a_1},\ldots,\overline{a}_{d+n-c+1})\cup \partial_B^{s-(d+n-c+1)}(a_{d+n-c+2},\ldots,a_{s})
    \in H^{s-1}(\kappa(B),\Z/2\Z).
    \end{multline*}
    
    By assumption, each $a_i$ with $i\leq d+n-c+1$ is invertible in $\mathcal{O}_{\Pn{n},M}$, so its image in $\kappa(B)$ comes from $\kappa(M)$ by commutative diagram (\ref{center}). Hence the cup product 
    $$
    (\overline{a_1},\ldots,\overline{a}_{d+n-c+1})\in H^{d+n-c+1}(\kappa(B),\Z/2\Z)
    $$ 
    lies in the image of the restriction map
    $$
    H^{d+n-c+1}(\kappa(M),\Z/2\Z)\rightarrow H^{d+n-c+1}(\kappa(B),\Z/2\Z).
    $$
    
    However, since $M\in\Pn{n}$ is of codimension $c$, the field $\kappa(M)$ is of cohomological dimension $d+n-c<d+n-c+1$. So 
    $$
    H^{d+n-c+1}(\kappa(M),\Z/2\Z)=0,
    $$ 
    and hence 
    $
    \partial_B^s(a_1,\ldots,a_s)=0.
    $
\end{proof}

\medskip

\begin{prop}\label{sq}
Let $k$ be a field with $char(k)\neq2$.
    Let $B$ be a discrete valuation ring with field of fractions $k(\Pn{n})=F$, centered at a point $M\in\Pn{n}$ of codimension $c$. Let 
    $
    (a_1,\ldots,a_s)\in H^s(F,\Z/2\Z).
    $ 
    Assume that at least one of $a_i$ is in $\mathcal{O}_{\Pn{n},M}^\times$, and its class $\overline{a_i}$ in $\kappa(M)$ is a square. Then 
    $$
    \partial_B^s(a_1,\ldots,a_s)=0.
    $$
\end{prop}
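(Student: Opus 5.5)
We follow the pattern of the proof of Proposition \ref{cod}, now using a square class rather than cohomological dimension to kill the residue. Since the cup product is graded-commutative and, with $\Z/2\Z$ coefficients, signs play no role, we may reorder the entries. So we assume $a_1\in\mathcal{O}_{\Pn{n},M}^\times$ with $\overline{a_1}\in\kappa(M)^{\times2}$. Write $(a_1,\ldots,a_s)=(a_1)\cup(a_2,\ldots,a_s)$.

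The first step is to note that $\varphi(\mathcal{O}_{\Pn{n},M})\subset B$, via the inclusion of local rings attached to the center $M$. Hence $a_1$ is a unit in $B$, and $(a_1)\in H^1_{\acute{e}t}(B,\Z/2\Z)$.

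The second step is to apply Proposition \ref{CTO} in the form of the remark following it, with $\alpha=(a_1)$ and $\beta=(a_2,\ldots,a_s)\in H^{s-1}(F,\Z/2\Z)$:
$$\partial_B^s(a_1,\ldots,a_s)=(\overline{a_1})\cup\partial_B^{s-1}(a_2,\ldots,a_s)\in H^{s-1}(\kappa(B),\Z/2\Z),$$
where $\overline{a_1}$ denotes the reduction of $a_1$ in $\kappa(B)$. If $s=1$, the symbol $(a_1)$ is unramified at $B$, and its residue vanishes directly.

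The third step uses the commutative diagram (\ref{center}). The image of $a_1$ in $\kappa(B)^\times$ equals $\phi$ applied to its image in $\kappa(M)^\times$. That image is a square by hypothesis, so the reduction of $a_1$ in $\kappa(B)$ is a square. Therefore $(\overline{a_1})=0$ in $H^1(\kappa(B),\Z/2\Z)$, the cup product vanishes, and $\partial_B^s(a_1,\ldots,a_s)=0$.

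The only step needing care is the second: the cup-product formula for a unit times an arbitrary class. It is exactly the second remark after Proposition \ref{CTO}, which we apply with the order of the factors swapped. That swap is harmless because the coefficients are $\Z/2\Z$.
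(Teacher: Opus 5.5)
Your proof is correct, but it kills the residue by a different mechanism than the paper's proof.

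The paper uses Hensel's lemma. Since $\overline{a_i}$ is a square in $\kappa(M)$, $a_i$ is already a square in $\widehat{\mathcal{O}}_{\Pn{n},M}$, hence in the completion $\widehat F$ via $\widehat{\mathcal{O}}_{\Pn{n},M}\to\widehat B$. So the whole symbol vanishes in $H^s(\widehat F,\Z/2\Z)$, and the residue, which factors through the completion, is zero.

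You instead stay over $F$ and use the formula $\partial_B^s\big((a_1)\cup\beta\big)=(\overline{a_1})\cup\partial_B^{s-1}(\beta)$ from the remark after Proposition \ref{CTO}. This means the square condition is only needed in $\kappa(B)$, where it follows from diagram (\ref{center}).

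What each route buys:
\begin{itemize}
\item Your route is more elementary: no completions, no Hensel, and no need to check that $\widehat{\mathcal{O}}_{\Pn{n},M}$ maps to $\widehat B$. It also works under the weaker hypothesis that $a_i$ is a unit of $B$ whose reduction in $\kappa(B)$ is a square.
\item The paper's route gives a stronger conclusion: the symbol itself dies over the fraction field of $\widehat{\mathcal{O}}_{\Pn{n},M}$. That is the form of the argument reused in the last case of the proof of Theorem \ref{config}, where one needs vanishing in $H^{d+n}(F_x,\Z/2\Z)$ and not just vanishing of a residue.
\end{itemize}

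A minor point: no swap of factors is needed when you invoke the remark. After your reordering, the unramified factor $(a_1)$ is already on the left, exactly as in $\alpha\cup\beta$.
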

\begin{proof}
    By Hensel lemma, if $\overline{a_i}$ is a square in $\kappa(M)$, then $a_i$ is also a square in the completion $\widehat{\mathcal{O}}_{\Pn{n},M}$. Let $\widehat B$ be the completion of $B$ and let $\widehat F$ be the completed valued field; note that the residue field of $\widehat{B}$ is $\kappa(B)$. As in (\ref{center}), one has $\widehat{\mathcal{O}}_{\Pn{n},M}\subset \widehat{B}$. Since the residue map factors through the completion $H^s(\widehat F,\Z/2\Z)$, and the image of $a_i$ in $H^1(\widehat F,\Z/2\Z)$ is zero,  proposition \ref{CTO} (applied to $\widehat B$) concludes the proof.
\end{proof}

\bigskip

\subsection{Examples}$\;$\\

In this section we generalize a configuration of Colliot-Th\'{e}l\`{e}ne and Ojanguren in \cite[Exemple 3.3]{CTO89}. 

\medskip

\textbf{General Configuration.} Let $k$ be a field of cohomological dimension $d$ with $char (k)\neq2$. Let $a_1,\ldots,a_d\in k^\times$ such that 
$$
(a_1,\ldots,a_d)\in H^d(k,\Z/2\Z)
$$
is nontrivial. 

Let $h_1,\ldots,h_{2n-2},\gamma_1,\gamma_2,\gamma_3,\gamma_4$ be $2n+2$ distinct homogeneous forms of degree $r$ in $\Pn{n}$. For $i=1,2,3,4$, denote by $\Lambda_{i}$ the four sets of homogeneous forms
$$\Lambda_{i}=\left\{\gamma_i+\delta_1h_1+\ldots+\delta_{2n-2}h_{2n-2}\,\vert \, \delta_j\in\{0,1\}\right\}.$$

Consider also the set of $n+1$ tuples 
\begin{multline*}
    E=\big\{(e_1,\ldots,e_{n-1},\varepsilon_1,\varepsilon_2)\,\vert \\ 
    e_i\in\{h_{2i-1},h_{2i}\},\varepsilon_j\in\Lambda_{2j-1}\cup\Lambda_{2j}, e_i\neq e_j\text{ and }\epsilon_i\neq\epsilon_j\text{ for }i\neq j\big\}.
    \end{multline*}
We make the following assumptions: 
\begin{itemize}
    \item[(A1)] Each set $\Lambda_i$ has cardinality $2^{2n-2}$;
    \item[(A2)] For every $(e_1,\ldots,e_{n-1},\varepsilon_1,\varepsilon_2)\in E$, we have that the hypersurfaces defined by $e_1,\ldots, e_{n-1},\varepsilon_1,\varepsilon_2$ intersect transversely: at any point $M$ lying in the intersection of $m$ of these hypersurfaces, the local defining equations extend to a regular system of parameters for $\mathcal{O}_{\Pn{n},M}$.
    \item[(A3)] The intersection of the hypersurfaces defined by $h_1,h_3,\ldots,h_{2n-3},\gamma_1$ and the intersection of the hypersurfaces defined by $h_1,h_3,\ldots,h_{2n-3},\gamma_3$ each contains a point $P$ such that the degree of its residue field extension $[\kappa(P) : k]$ is odd.
\end{itemize}

\bigskip

\begin{thm}
\label{config}
    With the above notations, let 
    $$g_1=\frac{\prod_{\lambda_i\in\Lambda_1}\lambda_i}{\prod_{\lambda_i\in\Lambda_2}\lambda_i}, \
g_2=\frac{\prod_{\lambda_i\in\Lambda_3}\lambda_i}{\prod_{\lambda_i\in\Lambda_4}\lambda_i},\
    f_i=\frac{h_{2i-1}}{h_{2i}},\  i=1,\ldots, n-1 .$$
    Let $\pi:X\rightarrow \Pn{n}$ be a fibration in quadrics such that the generic fiber $Q$ is a quadric in $2^{d+n-1}+1$ variables which is a  Pfister neighbor of the Pfister quadric $\langle \langle a_1,\ldots,a_d,f_1,\ldots,f_{n-1},g_1g_2\rangle \rangle$. Then the image of the element
    $$\alpha_1=\left(a_1,\ldots,a_d,f_1,\ldots,f_{n-1},g_1\right)\in H^{d+n}(F, \Z/2\Z)$$ in $H^{d+n}(F(Q), \Z/2\Z)
    $
    is nontrivial, and it lies in the relative unramified cohomology group
    $ H^{d+n}_{nr,\pi}(k(X)/k, \Z/2\Z).
    $
    
\end{thm}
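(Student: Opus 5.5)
Two things must be established: that $\alpha_1$ survives in $H^{d+n}(F(Q),\Z/2\Z)$, and that it lies in the relative unramified group. For nontriviality I will use Proposition \ref{nr}: verifying its hypotheses (i) and (ii) gives that $\alpha_1$ is nonzero in $H^{d+n}(F(Q),\Z/2\Z)$. For relative unramifiedness, the image of $\alpha_1$ is by construction in $\operatorname{Im}[H^{d+n}(F)\to H^{d+n}(F(Q))]$. It therefore remains to check, for every point $x\in\Pn{n}$ of positive codimension, that $\alpha_1$ dies in $H^{d+n}(F_x(Q),\Z/2\Z)$. By diagram (\ref{fac}) it suffices to show that over $F_x$ either $\alpha_1=0$ or $\alpha_2=0$. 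In the second case $\alpha_1=\alpha_{12}-\alpha_2$, and $\alpha_{12}$ vanishes on $F_x(Q)$. So both parts reduce to the same local dichotomy: at each point $M$, one of $\alpha_1,\alpha_2$ is "locally trivial".

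\textbf{Hypothesis (ii) and the local dichotomy.} Fix a point $M$ of codimension $c$. I will count how many entries of $(a_1,\ldots,a_d,f_1,\ldots,f_{n-1},g_j)$ are units up to squares in $\mathcal{O}_{\Pn{n},M}$. The $a_i$ are always units. The entry $f_i$ fails to be a unit only if $M$ lies on $h_{2i-1}$ or $h_{2i}$. The entry $g_1$ fails only if $M$ lies on some member of $\Lambda_1\cup\Lambda_2$, and $g_2$ likewise with $\Lambda_3\cup\Lambda_4$. By the transversality assumption (A2), $M$ lies on at most $c$ of the hypersurfaces in any tuple of $E$. The sets $\Lambda_i$ are designed, via the $\delta_jh_j$ shifts, so that two forms from the same $\Lambda_1\cup\Lambda_2$ cannot both vanish at $M$ in a way that ruins the count. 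The aim is to show that for at least one $j\in\{1,2\}$, the symbol $\alpha_j$ has at least $d+n-c+1$ unit entries. Proposition \ref{cod} then gives $\partial_B(\alpha_j)=0$ for every $B$ centered at $M$. The same count also gives triviality of $\alpha_j$ over $F_x$, since the unit part already lies in $H^{\geq d+n-c+1}$ of $\kappa(x)$ or of its lifts, which vanishes by cohomological dimension. The main obstacle is the case where $M$ lies on both some $\varepsilon_1$ from $\Lambda_1\cup\Lambda_2$ and some $\varepsilon_2$ from $\Lambda_3\cup\Lambda_4$ and also on many $h$'s. Here I expect to need Proposition \ref{sq} instead. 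At such $M$, the congruence $\lambda\equiv\gamma_i$ modulo the $h$'s vanishing at $M$ should make $g_1$ (or $f_i$) reduce to a square ratio in $\kappa(M)$. This works because every factor $\lambda\in\Lambda_1$ pairs with a $\lambda'\in\Lambda_2$ differing only in $\gamma$. More generally, the shifts by the $h_j$ vanishing at $M$ permute the factors, so the residue class of $g_1$ is a square. I would first try to make this pairing argument precise.

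\textbf{Hypothesis (i).} I will exhibit $B_1$ explicitly. Take $P$ from (A3), a point of $\{h_1=h_3=\cdots=h_{2n-3}=\gamma_1=0\}$ of odd degree; by (A2) this is a transverse zero-dimensional intersection. Let $B_1$ be the valuation obtained by blowing up $P$, or better, take successive residues along the flag $h_1=0$, $h_1=h_3=0$, and so on. Using Proposition \ref{CTO} repeatedly, since each of $f_i$ and $g_1$ has valuation $1$ along the corresponding divisor, the iterated residue of $\alpha_1$ becomes $(\bar a_1,\ldots,\bar a_d)$ restricted to $\kappa(P)$. Concretely, I will take the divisor $\{\gamma_1=0\}$ and show that the residue of $\alpha_1$ there is nonzero by further specializing down to $P$. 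Because $[\kappa(P):k]$ is odd, a restriction–corestriction argument shows that $(a_1,\ldots,a_d)$ remains nonzero in $H^d(\kappa(P),\Z/2\Z)$. Hence $\partial_{B_1}(\alpha_1)\neq0$ for the divisorial valuation of $\{\gamma_1=0\}$, detected through its further residues. The construction of $B_2$ is symmetric, using $\gamma_3$.
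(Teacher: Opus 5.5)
The nontriviality half of your proposal is the paper's argument: the divisorial valuation along $\gamma_1=0$, successive residues along $h_1,h_3,\ldots,h_{2n-3}$, and restriction--corestriction at the odd-degree point $P$. Only hypothesis (i) of Proposition \ref{nr} is needed for it.

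The gap is in the local dichotomy, where you apply the square argument at the wrong points.

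\textbf{The case you flag is handled by counting.} Suppose $M$ lies on a component of $g_1$, a component of $g_2$, and several $h$'s. Apply (A2) to a tuple of $E$ built as follows: $\varepsilon_1$ is a component of $g_1$ through $M$; each $e_i$ passes through $M$ whenever possible; $\varepsilon_2$ passes through $M$ if possible. At most $c$ of these hypersurfaces contain $M$, so $\alpha_2$ has at most $c-1$ non-unit entries, hence at least $d+n-c+1$ unit entries. Then $\alpha_2$ vanishes over $F_x$ because $H^{d+n-c+1}(\kappa(M),\Z/2\Z)=0$. This works whenever $M$ lies on any component of $g_1$, and symmetrically for $g_2$. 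Proposition \ref{sq} could not be used at such $M$ anyway: $g_1$ is not a unit there, and nothing makes any $f_i$ a square.

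\textbf{The case that defeats counting is one you do not isolate.} Take $M$ on some $h_j$ but on no component of $g_1$ or $g_2$, for instance the generic point of $\{h_1=0\}$, where $c=1$. Both $\alpha_1$ and $\alpha_2$ have only $d+n-1=d+n-c$ unit entries, so your stated aim (some $\alpha_j$ has $d+n-c+1$ unit entries) is false here. The residue
\[
(a_1,\ldots,a_d,\bar f_2,\ldots,\bar f_{n-1},\bar g_1)\in H^{d+n-1}(\kappa(M),\Z/2\Z)
\]
is not killed by cohomological dimension.

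What you need is that the unit $g_1$ reduces to a square in $\kappa(M)$. Toggling $\delta_j$, i.e.\ the map $\lambda\mapsto\lambda+h_j$, is a fixed-point-free involution of $\Lambda_1$, and likewise of $\Lambda_2$. Paired elements are congruent modulo $h_j$, so each of $\prod_{\Lambda_1}\lambda$ and $\prod_{\Lambda_2}\lambda$ reduces to a square. By Hensel, $g_1$ is then a square in $F_x$, so $\alpha_1=0$ there.

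Your proposed mechanisms do not give this. Pairing $\lambda\in\Lambda_1$ with the element of $\Lambda_2$ differing only in $\gamma$ yields no congruence. The congruence $\lambda\equiv\gamma_i$ holds only if every $h_l$ with $\delta_l=1$ vanishes at $M$. A mere permutation of the factors does not produce a square; you need the pairing into congruent partners.

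The correct case split, which coincides with the paper's proof, is:
\begin{itemize}
\item $M$ on a component of $g_1$: kill $\alpha_2$ by counting.
\item $M$ on a component of $g_2$: kill $\alpha_1$ by counting.
\item $M$ on neither but on some $h_j$: kill $\alpha_1$ by the square argument.
\item $M$ on no component of $D$: all entries are units.
\end{itemize}
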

\begin{proof}
    We first verify that $\alpha_1$ is nontrivial using Proposition \ref{nr} ({\it{i\,}}).

    Let $B_1$ be the discrete valuation ring with field of fractions $F$ centered at (the generic point of) $\gamma_1\in\Lambda_1$. Then by proposition \ref{CTO}, 
    $$
    \partial_{B_1}^{d+n}(\alpha_1)=(a_1,\ldots,a_d,f_1,\ldots,f_{n-1}).
    $$ 
    
    We claim that $ \partial_{B_1}^{d+n}(\alpha_1)$ is nonzero. We prove this by successively taking the residues of this element at (irreducible components of successive intersections of) $h_1=0 , h_3=0,\ldots,h_{2n-3}=0$. By (A2), these hypersurfaces  intersect transversely, so that at each step  the uniformizing parameter is given by (the image of) $h_i$,  hence, using proposition \ref{CTO}, at the end we obtain the class  
    $$
    (a_1,\ldots,a_d)\in H^d(\kappa(P),\Z/2\Z)
    $$ 
    where $P$ is a point in the intersection of $h_1,h_3,\ldots,h_{2n-3},\gamma_1$ such that  $[\kappa(P) : k]$ is odd. The existence of $P$ is guaranteed by (A3). This class in $H^d(\kappa(P),\Z/2\Z)$ is the image under the restriction map of the symbol $
    (a_1,\ldots,a_d)\in H^d(k,\Z/2\Z)
    $. Composing with the corestriction map, we have a map that factors through $H^d(\kappa(P),\Z/2\Z)$:
    $$ \operatorname{cor}_{\kappa(P)/k} \circ \operatorname{res}_{\kappa(P)/k} : H^d(k, \mathbb{Z}/2\mathbb{Z}) \to H^d(k, \mathbb{Z}/2\mathbb{Z})
    $$
    sending $
    (a_1,\ldots,a_d)
    $ to $[\kappa(P) : k](a_1,\ldots,a_d)=(a_1,\ldots,a_d)$.
    Since this element was chosen to be nontrivial, it follows that
    $$
    \partial_{B_1}^{d+n}(\alpha_1)\neq0.
    $$
    
    We can choose $B_2$ to be centered at $\gamma_3$, and the same argument applies. By Proposition \ref{nr} ({\it{i\,}}), $\alpha_1$ is nontrivial in $H^2(F(Q),\Z/2\Z)$.
    
    Next let $x\in\Pn{n}$ be a point of positive codimension $c$. We need to prove that 
    $$\alpha_1\in\operatorname{Ker}[H^{d+n}(k(X),\Z/2\Z)\rightarrow H^{d+n}(F_x(Q),\Z/2\Z)],$$ 
    where $F_x$ is the field of fractions of the completed local ring $\widehat{\mathcal{O}}_{\Pn{n},x}$. Note that $H^s_{\acute{e}t}(\widehat{\mathcal{O}}_{\Pn{n},x},\Z/2\Z)=H^s(\kappa(x),\Z/2\Z)=0$ for $s\geq d+n-c+1$ by cohomological dimension. Also recall that we have the factorization
    \begin{equation}\label{cohofac}
        H^s(F,\Z/2\Z)\rightarrow H^s(F_x,\Z/2\Z)\rightarrow H^s(F_x(Q),\Z/2\Z).
    \end{equation}
    Hence it suffices to prove $\alpha_1\in\operatorname{Ker}[H^{d+n}(k(X),\Z/2\Z)\rightarrow H^{d+n}(F_x,\Z/2\Z)]$.
    
    Define
    $$
    D=\operatorname{supp}(f_1)\cup\ldots \cup\operatorname{supp}(f_{n-1})\cup\operatorname{supp}(g_1)\cup\operatorname{supp}(g_2),
    $$
    and assume $x$ lies on exactly $c'$ irreducible components in $D$. By assumption (A2), $c'\leq c$.

    If $c'=0$, then every entry in $\alpha_1$ is invertible in $\widehat{\mathcal{O}}_{\Pn{n},M}$, and the image of $\alpha_1$ in $H^{d+n}(F_{x},\Z/2\Z)$ comes from $H^{d+n}_{\acute{e}t}(\widehat{\mathcal{O}}_{\Pn{n},x},\Z/2\Z)$. But $H^{d+n}_{\acute{e}t}(\widehat{\mathcal{O}}_{\Pn{n},x},\Z/2\Z)=0$ because $x$ is of positive codimension and $k$ is of cohomological dimension $d$.

    Now assume $c'>0$. If $x\in \operatorname{supp}(g_1)$, then at most $c^\prime-1$ irreducible components in $D\setminus \operatorname{supp}(g_1)$ contain $x$. Hence at least $d+n-(c'-1)=d+n-c'+1\geq d+n-c+1$ entries of $\alpha_2$ are invertible in $\widehat{\mathcal{O}}_{\Pn{n},M}$. Then the image of $\alpha_2$ in $H^{d+n}(F_{x},\Z/2\Z)$ can be written as a cup product $\alpha_{2}^{inv}\cup\alpha_{2}^\prime$, where $\alpha_{2}^{inv}$ comes from $H^{d+n-c+1}_{\acute{e}t}(\widehat{\mathcal{O}}_{\Pn{n},x},\Z/2\Z)$. But $H^{d+n-c+1}_{\acute{e}t}(\widehat{\mathcal{O}}_{\Pn{n},x},\Z/2\Z)=0$, so $\alpha_2$ is $0$ in $H^{d+n}(F_{x},\Z/2\Z)$. By the factorization (\ref{cohofac}), 
    $$\alpha_2\in\operatorname{Ker}[H^{d+n}(k(X),\Z/2\Z)\rightarrow H^{d+n}(F_x(Q),\Z/2\Z)].$$
    Applying proposition \ref{ker} in the local setting, we see that 
    $$\alpha_1=\alpha_{12}-\alpha_2\in\operatorname{Ker}[H^{d+n}(F_x,\Z/2\Z)\rightarrow H^{d+n}(F_x(Q),\Z/2\Z)].$$

    If $x\in\operatorname{supp}(g_2)$, then by the same argument we have that at least $d+n-c+1$ entries of $\alpha_1$ are invertible, and that
    $$
    \alpha_1\in\operatorname{Ker}[H^{d+n}(k(X),\Z/2\Z)\rightarrow H^{d+n}(F_x(Q),\Z/2\Z)].$$

If $x\not\in \operatorname{supp}(g_1)\cup \operatorname{supp}(g_2)$, then there exists at least one $f_i$ such that $x\in\operatorname{supp}(f_i)$. Hence $g_1$ reduces to a nonzero square in $\kappa(x)$ by construction. By Hensel lemma, $g_1$ is a square in $F_x$. Hence $\alpha_1$ is $0$ in $H^{d+n}(F_x,\Z/2\Z)$.

\end{proof}

\begin{rem}
    Since the relative unramified cohomology groups are subgroups of the unramified cohomology groups, $\alpha_1$ is also a nontrivial unramified element.
\end{rem}
We can  also simplify the denominators of $g_i$.
\begin{thm}
    Let 
    $$
    g_1=\frac{\prod_{\lambda_i\in\Lambda_1}\lambda_i}{h_{j_1}^{2^{2n-2}}},\ g_2=\frac{\prod_{\lambda_i\in\Lambda_3}\lambda_i}{h_{j_2}^{2^{2n-2}}},\ f_i=\frac{h_{2i-1}}{h_{2i}},\  i=1,\ldots, n-1
    $$ 
    where 
    $$
    h_{j_1},h_{j_2}\in\{h_1,\ldots,h_{2n-2}\}.
    $$
    Let $\pi:X\rightarrow \Pn{n}$ be a fibration in quadrics such that the generic fiber $Q$ is a quadric in $2^{d+n-1}+1$ variables which is a  Pfister neighbor of the Pfister quadric $\langle \langle a_1,\ldots,a_d,f_1,\ldots,f_{n-1},g_1g_2\rangle \rangle$. Then the element
    $$
    \alpha_1=\left(a_1,\ldots,a_d,f_1,\ldots,f_{n-1},g_1\right)\in H^{d+n}_{nr}(k(X)/k, \Z/2\Z)$$ 
    is nontrivial. Furthermore,
    $$
    \alpha_1\in H^{d+n}_{nr,\pi}(k(X)/k, \Z/2\Z).
    $$
\end{thm}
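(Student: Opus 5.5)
The plan is to run the proof of Theorem \ref{config} again with the new $g_1,g_2$, and to check at each step that replacing $\prod_{\Lambda_2}\lambda_i$ (resp. $\prod_{\Lambda_4}\lambda_i$) by $h_{j_1}^{2^{2n-2}}$ (resp. $h_{j_2}^{2^{2n-2}}$) changes nothing. The key remark is that $h_{j}^{2^{2n-2}}$ is a square. Hence, modulo $F^{\times 2}$, the new $g_1$ equals $\prod_{\lambda\in\Lambda_1}\lambda$ times a square, and the new $g_2$ equals $\prod_{\lambda\in\Lambda_3}\lambda$ times a square. So in $H^{d+n}(F,\Z/2\Z)$ the classes $\alpha_1,\alpha_2,\alpha_{12}$, and the Pfister form up to isometry, depend only on the numerators. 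In effect we are in the setting of Theorem \ref{config} with $\Lambda_2,\Lambda_4$ removed. Since the support of $g_1$ as a class in $H^1$ now contains only the components in $\Lambda_1$, the hypotheses (A1)--(A3) still make sense, restricted to the $\Lambda_1,\Lambda_3$ parts.

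The first step is nontriviality. I would take $B_1$ centered at $\gamma_1$ and $B_2$ at $\gamma_3$, exactly as before. The computation $\partial_{B_1}^{d+n}(\alpha_1)=(a_1,\ldots,a_d,f_1,\ldots,f_{n-1})$ via Proposition \ref{CTO} is unchanged, because $\gamma_1$ has valuation $1$ in $g_1$ and the other entries are units at $\gamma_1$ (using (A1) and (A2)). Taking iterated residues along $h_1,h_3,\ldots,h_{2n-3}$ then leads, through (A3) and the cor/res argument at an odd-degree point $P$, to the nonzero class $(a_1,\ldots,a_d)$. Proposition \ref{nr}(i), or rather its kernel argument via Proposition \ref{ker}, then shows that the image of $\alpha_1$ in $H^{d+n}(F(Q),\Z/2\Z)$ is nonzero.

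The second step is the relative unramified condition at each point $x$ of codimension $c\ge1$. I would set $D=\bigcup_i\operatorname{supp}(f_i)\cup\bigcup_{\Lambda_1\cup\Lambda_3}\{\lambda=0\}$ and let $c'$ be the number of components of $D$ through $x$. Then I would repeat the three cases. If $c'=0$, or more generally if at least $d+n-c+1$ entries are units up to squares, the class dies in $H^{d+n}(F_x,\Z/2\Z)$ by cohomological dimension, as in Proposition \ref{cod}. Here it matters that the entries $h_{j}$ appearing in the denominators are invisible modulo squares, so they do not count among the components through $x$. If $x$ lies on some $\lambda\in\Lambda_1$, then $\alpha_2$ vanishes locally, and $\alpha_1=\alpha_{12}-\alpha_2$ dies in $F_x(Q)$. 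The case $\lambda\in\Lambda_3$ is symmetric.

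The main obstacle is the remaining case, where $x$ lies on no $\lambda\in\Lambda_1\cup\Lambda_3$ but on some $\operatorname{supp}(f_i)$, i.e.\ on $h_{2i-1}$ or $h_{2i}$. Here I must show that $\prod_{\Lambda_1}\lambda$ reduces to a nonzero square in $\kappa(x)$, so that Hensel's lemma makes $g_1$ a square in $F_x$; alternatively I can apply Proposition \ref{sq}. If $x$ lies on $h_m=0$, the set $\Lambda_1$ splits into pairs $\lambda,\lambda+h_m$, which coincide modulo $h_m$. So the product is $\prod_{\delta_m=0}\lambda^2$ modulo $h_m$, a square, and it is nonzero because $x$ lies on no $\lambda$. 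This is the same pairing trick as before; the numerator structure is what matters, and the point to verify is that it does not use the old denominator. Having checked this, $\alpha_1$ lies in $H^{d+n}_{nr,\pi}(k(X)/k,\Z/2\Z)$, and therefore in $H^{d+n}_{nr}$ by the inclusion recalled from \cite[Remark 2.2]{P23}.
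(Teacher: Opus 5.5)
Your proposal is correct and follows the paper's route. Both arguments use that the denominators $h_{j_i}^{2^{2n-2}}$ are squares, so $\alpha_1,\alpha_2,\alpha_{12}$ and the Pfister form depend only on the numerators, and then re-run the proof of Theorem \ref{config}. The only difference is bookkeeping: the paper replaces $h_{j_i}^{2^{2n-2}}$ by $h_{t_i}^{2^{2n-2}}$ for auxiliary forms $h_{t_i}$ chosen transverse to everything, so that the support-counting of Theorem \ref{config} applies verbatim, whereas you drop the denominators from $D$ and work modulo squares directly, which avoids the auxiliary choice.
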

\begin{proof}
    Since the cohomology groups have coefficients $\Z/2\Z$, multiplying any entry of $\alpha_1$ by a nonzero  square does not change the cohomology class. Pick $h_{t_1},h_{t_2}\not \in \{h_1,\ldots,h_{2n-2}\}\cup\Lambda_1\cup\Lambda_3$ two homogeneous forms of degree $r$ such that for every tuple $(e_1,\ldots,e_{n-1},\varepsilon_1,\varepsilon_2)$ where $e_i\in\{h_{2i-1},h_{2i}\}$ and $\epsilon_j\in \Lambda_{2j-1}\cup h_{t_j}$, we have that the hypersurfaces defined by $e_1,\ldots, e_{n-1},\varepsilon_1,\varepsilon_2$ intersect transversely.
    Replace $g_i$ ($i=1,2$) by $g_i\frac{h_{j_i}^{2^{2n-2}}}{h_{t_i}^{2^{2n-2}}}$. Then the proof of Proposition \ref{config} carry over completely.
\end{proof}
\textbf{Example 1.} Let $k=\C(z_1,z_2)$, so $d=2$. Let $n=2$, and denote by $x,y,u$ the coordinates of  $\Pn{2}$. Let $a_1,a_2$ be non-squares in $k^\times$ such that $(a_1,a_2)\in H^2(k,\Z/2\Z)$ is nontrivial. 
Let
$$
\begin{aligned}[c]
    h_1&=x\\
    \gamma_1&=x+2y+4u
\end{aligned}\quad
\begin{aligned}[c]
    h_2&=y\\
    \gamma_3&=x+3y+5u.
\end{aligned}
$$
These choices satisfy conditions (A1), (A2) and (A3). Explicitly we have
\begin{align*}
      f&=\frac{x}{y},\\
    g_1&=\frac{(2x+2y+4u)(x+3y+4u)(2x+3y+4u)(x+2y+4u)}{x^4},\\
    g_2&=\frac{(2x+3y+5u)(x+4y+5u)(2x+4y+5u)(x+3y+5u)}{y^4}.
\end{align*}
    
    Let $X\rightarrow \Pn{2}$ be a fibration in quadrics such that the generic fiber $Q$ is a quadric in $9$ variables and is a  Pfister neighbor of $\langle \langle a_1,a_2,f,g_1g_2\rangle \rangle$, then $(a_1,a_2,f,g_1)$ is a nontrivial element in $H^{4}_{nr,\pi}(k(X)/k,\Z/2\Z)$.

\medskip
\textbf{Example 2.} Let $C_1,C_2,C_3$ be three distinct smooth plane curves of degree $r$ that do not have a common point. Assume $C_1$ and $C_3$ are chosen such that their intersection contains at least one closed point of odd degree over $k$ (for instance, a $k$-rational point). Denote by $c_i$ the defining equations of $C_i$. Consider the set 
$$
\Sigma_3=\left\{c_1=0, c_3=0\right\}\cup\left\{c_1=0, c_3+c_2=0\right\}\cup\left\{c_2=0, c_3=0\right\}\cup\left\{c_2=0, c_3+c_1=0\right\}
$$
of intersection points. Pick $C_4$ to be a smooth curve of degree $r$ that avoids $\Sigma_3$. Similarly, we define the set of intersections
$$
\Sigma_4=\left\{c_1=0, c_4=0\right\}\cup\left\{c_1=0, c_4+c_2=0\right\}\cup\left\{c_2=0, c_4=0\right\}\cup\left\{c_2=0, c_4+c_1=0\right\}
$$
and choose $C_5$ to be a smooth curve with degree $r$ that avoids $\Sigma_3\cup\Sigma_4$ and that its intersection with $C_1$ contains at least one closed point of odd degree over $k$. We define $\Sigma_5$ in the same way:
$$
\Sigma_5=\left\{c_1=0, c_5=0\right\}\cup\left\{c_1=0, c_5+c_2=0\right\}\cup\left\{c_2=0, c_5=0\right\}\cup\left\{c_2=0, c_5+c_1=0\right\}
$$
and pick $C_6$ to be a smooth curve of degree $r$ that avoids $\Sigma_3\cup\Sigma_4\cup\Sigma_5$. Let
\begin{align*}
    f&=\frac{c_1}{c_2},\\
    g_1&=\frac{c_3(c_3+c_1)(c_3+c_2)(c_3+c_1+c_2)}{c_4(c_4+c_1)(c_4+c_2)(c_4+c_1+c_2)},\\
    g_2&=\frac{c_5(c_5+c_1)(c_5+c_2)(c_5+c_1+c_2)}{c_6(c_6+c_1)(c_6+c_2)(c_6+c_1+c_2)}.
\end{align*}

Let $X\rightarrow \Pn{2}$ be a fibration in quadrics such that the generic fiber $Q$ is a quadric in $9$ variables and is a  Pfister neighbor of $\langle \langle a_1,a_2,f,g_1g_2\rangle \rangle$, then $(a_1,a_2,f,g_1)$ is a nontrivial element in $H^{4}_{nr,\pi}(k(X)/k,\Z/2\Z)$.

\begin{rem}
    The intersection of $C_1$ and $C_3$ (or $C_5$) is a dimension $0$ subscheme of degree $r^2$. The degree of this subscheme is equal to the sum of degrees of its closed points:
    $$
    \sum_{P_i\ \text{closed points}} [\kappa(P_i) : k] = r^2.
    $$
    
    If $r$ is odd, then we are always guranteed a closed point of odd degree. Then at each step we are taking away a finite set of points, so for almost all six tuples $(C_1,C_2,C_3,C_4,C_5,C_6)$ of smooth curves with degree $r$ this is a valid example.

    If $r$ is even, however, (A3) becomes a nontrivial condition, and we no longer have that for almost all six tuples the example works.
\end{rem}

\medskip
\textbf{Example 3.} Let $k=\C(z)$, so $d=1$. Let $n=3$, and denote by $x,y,u,v$ the coordinates of $\Pn{3}$. Let $a_1$ be a non-square in $k^\times$ and
$$\begin{aligned}[c]
    h_1&=x\\
    h_3&=u\\
    \gamma_1&=x + 2y + 4u + 6v
\end{aligned}\quad
\begin{aligned}[c]
    h_2&=y\\
    h_4&=v\\
    \gamma_3&=x + 7y + 21u + 29v.
\end{aligned}$$
These choices satisfy conditions (A1), (A2), and (A3). Explicitly we have
\begin{align*}
      f_1&=\frac{x}{y},\\
      f_2&=\frac{u}{v},\\
    g_1&=\frac{\prod_{\delta_x,\delta_y,\delta_u,\delta_v\in\{0,1\}}(x + 2y + 4u + 6v+\delta_x x+\delta_y y+\delta_u u+\delta_v v)}{u^{16}},\\
    g_2&=\frac{\prod_{\delta_x,\delta_y,\delta_u,\delta_v\in\{0,1\}}(x + 7y + 21u + 29v+\delta_x x+\delta_y y+\delta_u u+\delta_v v)}{y^{16}}.\\
\end{align*}
    
    Let $X\rightarrow \Pn{3}$ be a fibration in quadrics such that the generic fiber $Q$ is a quadric in $9$ variables and is a  Pfister neighbor of $\langle \langle a_1,f_1,f_{2},g_1g_2\rangle \rangle$, then $(a_1,f_1,f_2,g_1)$ is a nontrivial element in $H^{4}_{nr,\pi}(k(X)/k,\Z/2\Z)$.

\bigskip

\section{Conic bundles over $\Pn{2}$}

Let $k$ be a field of characteristic $char(k)\neq2$. Let $S$ be a smooth, projective, rational surface over $k$ and let $F$ be the field of functions of $S$. Let $X$ be a threefold equipped with a conic bundle structure 
\begin{equation}\label{fibration}
    \pi: X\to S,
\end{equation} 
and let $\alpha\in Br(F)[2]$ be the class corresponding to the quaternion algebra associated to the conic given by the generic fiber. Assume that $\alpha$ is nonzero and that the ramification curve 
$$C\coloneqq \left\{x\in S^{(1)}\mid \partial^2_x(\alpha)\neq0\right\}$$
has smooth irreducible components intersecting transverselly. Let $C=\cup_1^nC_i$ be the irreducible decomposition of $C$, and let 
$$\{(\gamma_i)\}_i\in \oplus_1^n H^1(\kappa(C_i),\Z/2\Z)$$ be the associated family of residues of $\alpha$. Recall the Gersten complex:
    \begin{multline*}
        0\to H^2(k,\Z/2\Z)\to H^2(F,\Z/2\Z)\to\\
        \stackrel{\phi_2}{\to}\oplus_{x\in{S}^{(1)}}H^1(\kappa(x),\Z/2\Z)\stackrel{\phi_1}\to\oplus_{P\in S^{(2)}} H^0(\kappa(P),\Z/2\Z),
    \end{multline*}
     where $\phi_2=\oplus_{x\in S^{(1)}}\partial^2_x$ and $\phi_1=\oplus_{P\in S^{(2)}}\partial^1_P$.

\bigskip
    
Let $\zeta=\{(m_i)\}_i\in (\Z/2\Z)^n$, and assume     
    $$\gamma_{\zeta}=(\gamma_i^{m_i})\in\operatorname{Ker}[\oplus_{x\in{S}^{(1)}}H^1(\kappa(x),\Z/2\Z)\to\oplus_{P\in S^{(2)}} H^0(\kappa(P),\Z/2\Z)].$$

\medskip

\begin{defn}
    We say that $\gamma_{\zeta}$ {\bf lifts to $H^2$} if there exists  $\alpha_\zeta\in H^2(F,\Z/2\Z)$ such that 
\begin{equation}\label{deflifts}
\phi_2(\alpha_{\zeta})=\gamma_{\zeta}.
\end{equation}
If this is the case, let 
\begin{equation}\label{betadef}
\beta_\zeta\mbox{ be the image of }\alpha_\zeta
\end{equation}
under the map $H^2(F,\Z/2\Z)\to H^2(k(X),\Z/2\Z)$.
\end{defn}
\begin{rem}
    The Gersten complex is exact for $\mathbb{A}^2$ (\cite{MR96}, Proposition 8.6). However, it fails to be exact at degree $1$ for $\mathbb{P}^2$ over a field $k$ with cohomological dimension greater than $0$. To see this, let $c\in H^1(k,\Z/2\Z)$ represents a nontrivial constant class. Consider the element $\gamma\in \oplus_{x\in{S}^{(1)}}H^1(\kappa(x),\Z/2\Z)$ with $c$ at the generic point of the line defined by $x=0$ and $0$ elsewhere. Then $\gamma$  does not lift to a global class in $H^2(F,\Z/2\Z)$.

    More generally, one has the following exact sequence: 
     \begin{equation}
        0\to H^1(S,\mathcal{K}_2)/2\to H^{1}(S,\mathcal{H}^2)\to CH^2(S)[2]\to 0, 
    \end{equation}
    where the middle term can be identified with the first homology group of the Gersten complex at $\oplus_{x\in{S}^{(1)}}H^1(\kappa(x),\Z/2\Z)$
    (see \cite{CT93}, \cite{DQ73}, and \cite{BO74} for the details and the definition of $\mathcal{K}_2$ and $\mathcal{H}^2$).
    Since $S$ is rational over $k$, its Chow groups of zero cycles $CH^2(S)\cong CH_0(S)\cong \Z$ is torsion free. We therefore have the isomorphism
    $$
H^1(S,\mathcal{K}_2)/2\cong H^{1}(S,\mathcal{H}^2).
    $$
    Denote by $\bar{k}$ the algebraic closure of $k$, $\bar{S}=S\times_k \bar{k}$, and $G=Gal(\bar{k}/k)$ the Galois group. In \cite{CTR85}, Colliot-Th\'{e}l\`{e}ne and Raskind established that 
    $$
    H^1(S,\mathcal{K}_2)\cong (H^1(\bar{S},\mathcal{K}_2))^G,
    $$
    and that (also see \cite{AP11})  $\operatorname{Pic}\bar{S}\otimes \bar{k}^*\cong H^1(\bar{S},\mathcal{K}_2).
    $
    Hence the failure of exactness of the sequence is controlled by the group $(\operatorname{Pic}\bar{S}\otimes \bar{k}^*)^G/2$.

\end{rem}

The goal of the main theorem of this section is to provide a formula for the unramified cohomology group $H^2_{nr}(k(X)/k,\Z/2\Z)$. The proof relies on the computation of $\partial^2_v(\beta_\zeta)$ with respect to different valuations $v$ on $k(X)$. We will stratify the local computations based on the codimension and the geometry of the center of $v$.

\subsection{Local forms}

We first recall:

\begin{thm}\label{pur}
    Let $i > 0$, $n > 0$ and $j$ be integers. Let $k$ be a field of characteristic prime to $n$. Let $A$ be a semi-local ring of a smooth integral $k$-variety $Y$, with fraction field $F = k(Y)$, and let $\alpha$ be an element of $H^i(F,\mu_n^{\otimes j} )$. If for each height one prime $p$ of $A$, $\alpha$ belongs to the image of $H^i_{\acute{e}t}(A_p,\mu_n^{\otimes j})$, then $\alpha$ comes from a (unique) element of $H^i_{\acute{e}t}(A,\mu_n^{\otimes j})$.
\end{thm}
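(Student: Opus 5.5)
This is the purity theorem for étale cohomology of regular semi-local rings (Bloch–Ogus / Gabber / Panin), and I would prove it by reduction to the Gersten conjecture. The plan is to show that the complex
$$0\to H^i_{\acute{e}t}(A,\mu_n^{\otimes j})\to H^i(F,\mu_n^{\otimes j})\xrightarrow{\oplus\partial_p}\bigoplus_{p\ \mathrm{ht}\,1}H^{i-1}(\kappa(p),\mu_n^{\otimes(j-1)})$$
is exact. Two facts are needed: injectivity of the first map, and exactness at $H^i(F)$. Uniqueness in the statement is exactly the injectivity. For existence, the hypothesis says that $\alpha$ lies in the image of $H^i_{\acute{e}t}(A_p)$ for every height one $p$. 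Since $A_p$ is a discrete valuation ring, the localization sequence for $\operatorname{Spec}A_p$ identifies that image with $\operatorname{Ker}\partial_p$. So the hypothesis is equivalent to $\alpha$ being killed by all the residues, and exactness then yields the required lift.

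The steps, in order, are as follows.

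\textbf{Step 1.} Treat the case of a local ring $A=\mathcal{O}_{Y,y}$. Here I would invoke Bloch–Ogus theory: for a smooth variety over a field, the Zariski sheaf $\mathcal{H}^i$ associated to $U\mapsto H^i_{\acute{e}t}(U,\mu_n^{\otimes j})$ has a flasque Gersten resolution. Taking stalks at $y$ gives the exactness of the Gersten complex for $\mathcal{O}_{Y,y}$. The two key inputs are the effaceability theorem, which is Quillen's trick of presenting $Y$ locally as a relative curve over affine space via a Noether-normalization-type projection, and absolute purity for smooth pairs.

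\textbf{Step 2.} Pass from the local case to the semi-local case. The Gersten argument of Bloch–Ogus and Gabber works equally well for semi-local rings of smooth varieties, provided the geometric presentation lemma is chosen simultaneously for finitely many points. Over an infinite field this is Gabber's presentation lemma. Over a finite field one needs a separate trick: a norm argument using two extensions of coprime degrees, or Panin's version. Alternatively, I would deduce the semi-local statement from the local ones as follows. Given $\alpha$ unramified at all height one primes, Step 1 shows that $\alpha$ extends over each local ring $A_{m_1},\dots,A_{m_r}$ at the maximal ideals. These local extensions glue Zariski-locally to a section of $\mathcal{H}^i$ on a neighbourhood. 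One then uses the vanishing $H^1_{Zar}(\operatorname{Spec}A,\mathcal{H}^i)=0$ for a semi-local scheme, which comes from the Gersten resolution being flasque, to conclude that $\alpha$ comes from $H^0(\operatorname{Spec}A,\mathcal{H}^i)$.

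\textbf{Step 3.} Compare $H^0(\operatorname{Spec}A,\mathcal{H}^i)$ with $H^i_{\acute{e}t}(A)$ itself. I would use the Bloch–Ogus spectral sequence $E_2^{p,q}=H^p_{Zar}(\operatorname{Spec}A,\mathcal{H}^q)\Rightarrow H^{p+q}_{\acute{e}t}$. On a semi-local scheme the Gersten resolution gives $E_2^{p,q}=0$ for $p>0$, so the edge map $H^i_{\acute{e}t}(A)\to H^0(\operatorname{Spec}A,\mathcal{H}^i)$ is an isomorphism. This also yields injectivity into $H^i(F)$, because $\mathcal{H}^i$ embeds into the constant sheaf $H^i(F)$.

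The main obstacle is the effaceability/presentation lemma in Steps 1 and 2, particularly over finite fields and in the semi-local setting. I would not reprove it, but would cite Colliot-Thélène–Hoobler–Kahn, ``The Bloch–Ogus–Gabber theorem'', which states precisely this semi-local purity result for smooth varieties over any field with $n$ invertible.
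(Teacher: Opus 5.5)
Your proposal is, in substance, the paper's proof. The paper simply cites \cite[Theorem 3.8.2]{CT92}, which is the same Bloch--Ogus--Gabber purity theorem for semi-local rings of smooth varieties that you end up citing via Colliot-Th\'el\`ene--Hoobler--Kahn. Your use of purity for the discrete valuation rings $A_p$ to turn the hypothesis into the vanishing of the residues $\partial_p$ is the standard step.

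One caution concerns the self-contained local-to-semi-local deduction you sketch in Steps 2--3: it does not work as written.

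In Step 2, the vanishing of $H^1_{Zar}(\operatorname{Spec}A,\mathcal{H}^i)$ is not what is needed. Once the local theorem holds at every prime of $A$, $\mathcal{H}^i$ is a subsheaf of the constant sheaf $H^i(F,\mu_n^{\otimes j})$. Since $\alpha$ lies in every stalk, it is automatically a global section.

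The real problem is Step 3. A flasque Gersten resolution only identifies $H^p_{Zar}(\operatorname{Spec}A,\mathcal{H}^q)$ with the cohomology of the Gersten complex of global sections over $\operatorname{Spec}A$. The vanishing of these groups for $p>0$ is exactly the exactness of the semi-local Gersten complex in positive degrees. It is not a formal consequence of semi-locality plus the local case. Semi-locality alone does not kill Zariski cohomology: on the spectrum of a semi-local Dedekind ring with two closed points, the extension by zero of $\Z$ from the generic point has $H^1\cong\Z$.

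So the passage from local to semi-local genuinely needs Gabber's presentation lemma for finitely many points, plus a transfer argument over finite fields. That is exactly what your final citation supplies. With that citation your proof is complete, and the alternative route in Steps 2--3 is superfluous.
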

\begin{proof}
    See \cite[Theorem 3.8.2]{CT92}
\end{proof}

Next we will need results of Saltman \cite{S07} for local forms of symbols on surfaces: we will apply these results for $\alpha_0=\alpha$ or $\alpha_{\zeta}$.

\begin{thm}\label{S2.1}
    Let $k$ be a field of characteristic $char(k)\neq2$ and let $S$ be a smooth and projective surface over $k$. Let $\alpha_0\in H^2(F,\Z/2\Z)$ be an element such that the ramification locus of $\alpha_0$ on $S$ is a simple normal crossings divisor.
    \begin{itemize}
        \item [a)] If $M$ is a point only on one ramification locus component $C_i$, then 
        $$\alpha_0=\alpha_0^\prime+(u,\pi_i)$$ 
        in $H^2(F,\Z/2\Z)$, where $\alpha_0^\prime$ is an element in $H_{\acute{e}t}^2(\mathcal{O}_{S,M},\Z/2\Z)$, $u$ is a unit in $\mathcal{O}_{S,M}$, and $\pi_i$ is a local parameter of $C_i$ at $M$.
        \item [b)] Suppose $M$ is a nodal point contained in $C_i$ and $C_j$. Let $\pi_i$ and $\pi_j$ be  local parameters of $C_i$ and $C_j$ at $M$. Then one of the following holds:
        \begin{itemize}
            \item [i)] $\alpha_0=\alpha_0^\prime+(u,\pi_i)+(v,\pi_j),$ and $\partial^2_{C_i}(\alpha_0)$ is unramified at $M$;
            \item [ii)] $\alpha_0=\alpha_0'+(u\pi_j,v\pi_i),$ and $\partial^2_{C_i}(\alpha_0)$ is ramified at $M$,
        \end{itemize}
         where $\alpha_0^\prime$ is an element in $H_{\acute{e}t}^2(\mathcal{O}_{S,M},\Z/2\Z)$, and $u, v$ are units in $\mathcal{O}_{S,M}$.
    \end{itemize}
\end{thm}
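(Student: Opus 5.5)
The plan is to work entirely in the regular local ring $\mathcal{O}_{S,M}$ of dimension $2$ (a UFD) and reduce both parts to the purity statement Theorem \ref{pur}. In each case we subtract from $\alpha_0$ an explicit symbol whose residues along the height one primes of $\mathcal{O}_{S,M}$ coincide with those of $\alpha_0$. Purity then says the difference lies in $H^2_{\acute{e}t}(\mathcal{O}_{S,M},\Z/2\Z)$.

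Three inputs will be used repeatedly.
\begin{itemize}
\item With $\Z/2\Z$ coefficients, the residue of a symbol $(a,b)$ at a discrete valuation with $v(a)=0$ and $v(b)=1$ is the class $\bar a$. Signs are irrelevant mod $2$ up to terms $(x,-1)$, which are handled by the same rule.
\item The composite of residues in the Gersten complex of $\operatorname{Spec}\mathcal{O}_{S,M}$ vanishes (Kato's complex). Hence $\sum_{x\ni M}\partial^1_M(\partial^2_x\alpha_0)=0$, the sum running over the ramification components through $M$.
\item For the smooth curve $C_i$, the ring $\mathcal{O}_{C_i,M}$ is a DVR. A class in $H^1(\kappa(C_i),\Z/2\Z)=\kappa(C_i)^\times/\kappa(C_i)^{\times2}$ that is unramified at $M$ is represented by a unit of $\mathcal{O}_{C_i,M}$. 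Any unit of $\mathcal{O}_{C_i,M}=\mathcal{O}_{S,M}/(\pi_i)$ lifts to a unit of $\mathcal{O}_{S,M}$.
\end{itemize}

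\emph{Part a).} Only $C_i$ passes through $M$, so the reciprocity relation gives $\partial^1_M(\gamma_i)=0$, where $\gamma_i=\partial^2_{C_i}(\alpha_0)$. Hence $\gamma_i=\bar u$ for some $\bar u\in\mathcal{O}_{C_i,M}^\times$; lift it to a unit $u\in\mathcal{O}_{S,M}^\times$. The residue of $(u,\pi_i)$ along $C_i$ is $\bar u=\gamma_i$. At every other height one prime $p$ of $\mathcal{O}_{S,M}$, both $u$ and $\pi_i$ are units, so the residue is $0$. Since $\alpha_0$ is unramified at all such $p$ by hypothesis, $\alpha_0-(u,\pi_i)$ has trivial residue at every height one prime. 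Theorem \ref{pur} then gives $\alpha_0':=\alpha_0-(u,\pi_i)\in H^2_{\acute{e}t}(\mathcal{O}_{S,M},\Z/2\Z)$.

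\emph{Part b).} Now $\pi_i,\pi_j$ form a regular system of parameters, and the reciprocity relation reads $\partial^1_M(\gamma_i)=\partial^1_M(\gamma_j)$.
\begin{itemize}
\item If $\gamma_i$ is unramified at $M$, then so is $\gamma_j$. Write $\gamma_i=\bar u$ and $\gamma_j=\bar v$ with unit lifts $u,v$. The symbol $(u,\pi_i)+(v,\pi_j)$ has residue $\bar u$ along $C_i$, since $v$ and $\pi_j$ are units at the generic point of $C_i$. Symmetrically its residue along $C_j$ is $\bar v$, and it has residue $0$ elsewhere. Purity gives form i).
\item If $\gamma_i$ is ramified at $M$, then $\bar\pi_j$ is a uniformizer of $\mathcal{O}_{C_i,M}$ and $\gamma_i$ has odd valuation there. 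So $\gamma_i=\bar u\,\bar\pi_j$ modulo squares with $\bar u$ a unit, and likewise $\gamma_j=\bar v\,\bar\pi_i$. Lift $\bar u,\bar v$ to units $u,v$ of $\mathcal{O}_{S,M}$. Expand
\[
(u\pi_j,v\pi_i)=(u\pi_j,v)+(u\pi_j,\pi_i).
\]
Its residue along $C_i$ is the class of $u\pi_j$, which is $\gamma_i$. Expanding symmetrically in the first slot, its residue along $C_j$ is the class of $v\pi_i$, which is $\gamma_j$. It is unramified at every other height one prime. Purity gives form ii).
\end{itemize}

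The main obstacle is justifying the reciprocity relation at the closed point $M$: that $\partial^1_M\circ\partial^2$, summed over the components through $M$, vanishes on $H^2(F,\Z/2\Z)$. I would cite Kato's result that the Gersten/Bloch--Ogus complex is a complex for the regular local scheme $\operatorname{Spec}\mathcal{O}_{S,M}$, rather than reprove it. The remaining residue computations for symbols are routine by the tame symbol rule recalled above.
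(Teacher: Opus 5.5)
Your proof is correct, but it does not follow the paper: the paper gives no argument of its own for this statement and simply cites \cite[Theorem 2.1]{S07}. You instead derive the local forms directly from two tools the paper already uses elsewhere. The first is purity: Theorem \ref{pur}, combined with purity for discrete valuation rings as in Lemma \ref{cod2i}. The second is the fact that the Gersten complex of $\operatorname{Spec}\mathcal{O}_{S,M}$ is a complex, which the paper itself invokes as ``local reciprocity'' in Lemmas \ref{cod2ij} and \ref{Scor}.

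The individual steps check out:
\begin{itemize}
\item Since the ramification components are smooth at $M$, the second Gersten differential applied to $\gamma_i$ is the residue of the discrete valuation ring $\mathcal{O}_{C_i,M}=\mathcal{O}_{S,M}/(\pi_i)$. Reciprocity therefore gives $\partial^1_M(\gamma_i)=0$ in case a) and $\partial^1_M(\gamma_i)=\partial^1_M(\gamma_j)$ in case b).
\item In the nodal case $\bar\pi_j$ is a uniformizer of $\mathcal{O}_{C_i,M}$, so modulo squares $\gamma_i$ is either a unit or a unit times $\bar\pi_j$.
\item Units lift from $\mathcal{O}_{S,M}/(\pi_i)$ to $\mathcal{O}_{S,M}$.
\item The symbols $(u,\pi_i)$, $(u,\pi_i)+(v,\pi_j)$ and $(u\pi_j,v\pi_i)$ have the same residues as $\alpha_0$ at every height one prime of the factorial ring $\mathcal{O}_{S,M}$.
\end{itemize}

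The citation buys brevity. Your route gives a self-contained argument in the paper's own $\Z/2\Z$-cohomological language, using only results the paper already states. It also explains why the two cases are governed by $\partial^1_M(\gamma_i)$: reciprocity ties together the ramification of $\gamma_i$ and $\gamma_j$ at $M$, and each case is settled by writing down a symbol with the prescribed residues.
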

\begin{proof}
    See \cite[Theorem 2.1]{S07}.
\end{proof}

\begin{cor}\label{S2.5}
    Assume we are in the case b) i) of Theorem \ref{S2.1}. Denote by $\overline{\gamma_i}$ and $\overline{\gamma_j}$ the reductions of $\gamma_i=\partial^2_{C_i}(\alpha_0)$ and $\gamma_j=\partial^2_{C_j}(\alpha_0)$ in $H^1(\kappa(M),\Z/2\Z)$. If $\overline{\gamma_i}\neq \overline{\gamma_j}$, then $\alpha_0$ has index larger than $2$ over $K_M$, the fraction field of the completion of the local ring at $M$.
\end{cor}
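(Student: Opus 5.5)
We argue by contradiction over the completed local field $K_M$. Suppose the index of $\alpha_0$ over $K_M$ is at most $2$. Then $\alpha_0\otimes K_M$ is represented by a quaternion algebra, i.e. by a single symbol $(a,b)$ with $a,b\in K_M^\times$. Since $\widehat{\mathcal{O}}_{S,M}$ is a complete regular local ring of dimension $2$ with parameters $\pi_i,\pi_j$, we can normalize $a,b$ modulo squares to the form $w\pi_i^{e}\pi_j^{f}$ with $w$ a unit and $e,f\in\{0,1\}$, using the unique factorization in $\widehat{\mathcal{O}}_{S,M}$ and the fact that the only primes along which $\alpha_0$ ramifies are $\pi_i,\pi_j$. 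By Theorem \ref{pur} applied to primes other than $(\pi_i),(\pi_j)$, we may assume that the symbol $(a,b)$ has no further ramification.

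On the other side, case b) i) of Theorem \ref{S2.1} gives
\[
\alpha_0=\alpha_0'+(u,\pi_i)+(v,\pi_j),
\]
with $\alpha_0'$ unramified. Over the complete field the residue field of $\widehat{\mathcal{O}}_{S,M}$ is $\kappa(M)$, so $H^2_{\acute{e}t}(\widehat{\mathcal{O}}_{S,M},\Z/2\Z)\cong H^2(\kappa(M),\Z/2\Z)$. Thus $\alpha_0'$ is a constant class $c$, and $\gamma_i,\gamma_j$ restrict at $M$ to $\bar u,\bar v$ (up to sign conventions). The hypothesis is $\bar u\neq\bar v$ in $\kappa(M)^\times/\kappa(M)^{\times2}$.

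Next we enumerate the normalized forms of $(a,b)$ and compute residues along $\pi_i$ and $\pi_j$ by Proposition \ref{CTO}. If neither entry involves $\pi_i$, there is no residue along $C_i$. The possibilities $(w,\pi_i)$, $(w,\pi_j)$, $(w,\pi_i\pi_j)$, and $(w\pi_j,w'\pi_i)$ give the following residue pairs $(\partial_{C_i},\partial_{C_j})$ reduced at $M$: $(\bar w,1)$, $(1,\bar w)$, $(\bar w,\bar w)$, and, in the last case, residues that are ramified at $M$ (case ii), which we exclude). Since both $\gamma_i$ and $\gamma_j$ are nonzero and unramified at $M$, matching residues forces the form $(w,\pi_i\pi_j)$ with $\bar w=\bar u=\bar v$, contradicting $\bar u\neq\bar v$. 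In the case $\bar u=1$ or $\bar v=1$ at the reduction, a residue may vanish at $M$ while remaining nonzero along the curve. We handle this by comparing the residues in $H^1$ of the complete discretely valued fields of fractions of $\widehat{\mathcal{O}}_{S,M}/(\pi_i)$, rather than only their values at $M$. Concretely, the residue of $(w,\pi_i\pi_j)$ along $\pi_i$ is $w\bmod\pi_i$, and we compare its reduction with $u$; to get a symmetric contradiction, we take one more residue at the point $M$ of the curve $C_i$ (Proposition \ref{CTO} again).

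The main obstacle is the normalization step: justifying that an index-$2$ class over $K_M$ can be written as $(w\pi_i^e\pi_j^f,w'\pi_i^{e'}\pi_j^{f'})$ with no additional prime factors. First I would try the following. Take a splitting quadratic extension $K_M(\sqrt a)$. The ramification of $\alpha_0$ forces the discriminant to be supported on $\pi_i\pi_j$. Alternatively, one can cite Saltman's analysis \cite{S07}, which shows directly that in case b) i) with distinct reductions the class is a sum of two symbols over $K_M$ that is not a single symbol. Comparing with the tensor product $(u,\pi_i)+(v,\pi_j)$, one can then apply the Albert form criterion: the Albert form $\langle u,\pi_i,-u\pi_i,-v,-\pi_j,v\pi_j\rangle$ is anisotropic over $K_M$ by Springer's theorem applied twice, which uses exactly $\bar u\neq\bar v$ together with the condition that the residues are nontrivial. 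This Albert-form route is probably the cleanest, and I would adopt it as the main argument.
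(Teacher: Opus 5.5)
\textbf{Gap 1: the constant part $\alpha_0'$ is dropped.} The Albert-form route you adopt as your main argument silently discards $\alpha_0'$. You correctly note that over $K_M$ the class $\alpha_0'$ becomes a constant class $c\in H^2(\kappa(M),\Z/2\Z)$. But nothing in case b) i) forces $c=0$; for instance, $\alpha_0'$ can be a nonzero constant class from $k$ that stays nonzero in $\kappa(M)$. Then $\alpha_0\otimes K_M=c+(u,\pi_i)+(v,\pi_j)$ is not the biquaternion class $(u,\pi_i)\otimes(v,\pi_j)$. So anisotropy of $\langle u,\pi_i,-u\pi_i,-v,-\pi_j,v\pi_j\rangle$ says nothing about its index.

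The repair is to argue with residues instead of Albert forms.
\begin{enumerate}
\item Pass to the completion $L$ of $K_M$ along $(\pi_i)$; the index can only drop. Its residue field $\ell=\operatorname{Frac}(\widehat{\mathcal{O}}_{S,M}/\pi_i)$ is complete for $\overline{\pi_j}$, with residue field $\kappa(M)$.
\item Write $\alpha_0=\beta+(u,\pi_i)$ with $\beta=c+(v,\pi_j)$, which is unramified along $\pi_i$. The class $\alpha_0\otimes L$ is nonzero, since its residue is $\bar u\neq 0$.
\item A quadratic extension of $L$ that kills the residue $\bar u$ is either $L(\sqrt{u})$ or ramified, $L(\sqrt{w\pi_i})$. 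Over $L(\sqrt{u})$ the class becomes unramified with residue class $\bar\beta\otimes\ell(\sqrt{\bar u})$. Over $L(\sqrt{w\pi_i})$ one has $(u,\pi_i)=(u,w)$, so the class becomes unramified with residue class $\bar\beta+(\bar u,\bar w)$ in $H^2(\ell,\Z/2\Z)$.
\item Both are nonzero as soon as $\bar\beta\otimes\ell(\sqrt{\bar u})\neq 0$. This holds because $\ell(\sqrt{\bar u})$ is complete with uniformizer $\overline{\pi_j}$ and residue field $\kappa(M)(\sqrt{\bar u})$, and there $\bar\beta$ has residue $\bar v$. That residue is nonzero exactly when $\bar v\notin\{1,\bar u\}$ modulo squares.
\end{enumerate}
Hence the index is at least $4$, whatever $c$ is. When $c=0$ your Springer computation is correct, but run it over $L$ and then over $\ell$, since $K_M$ itself is not complete discretely valued. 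The residue forms are $\langle 1,-\bar u\rangle$ and $\langle\bar u,-\bar v\rangle\perp\overline{\pi_j}\langle -1,\bar v\rangle$.

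\textbf{Gap 2: the statement needs $\overline{\gamma_i},\overline{\gamma_j}$ independent.} What both arguments actually use is that $\overline{\gamma_i}=\bar u$ and $\overline{\gamma_j}=\bar v$ are independent in $\kappa(M)^\times/\kappa(M)^{\times 2}$, i.e.\ both nontrivial and distinct. Nontriviality of $\gamma_i,\gamma_j$ along the curves is not the relevant condition. Your proposed treatment of the case $\bar u=1$ or $\bar v=1$ (comparing residues over $\operatorname{Frac}(\widehat{\mathcal{O}}_{S,M}/\pi_i)$ and taking one more residue) cannot work, because the conclusion is false there.

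For a counterexample, work locally at the origin of $\mathbb{A}^2_k$ with $\pi_i=x$, $\pi_j=y$, and $\alpha_0=(1+y,x)+(a,y)$ for some $a\in k^\times\setminus k^{\times 2}$. This is case b) i), with $\gamma_i=1+y\neq 0$ on $C_i$ and $\overline{\gamma_i}=1\neq a=\overline{\gamma_j}$. Yet $1+y$ is a square in $k[[x,y]]$, so $\alpha_0\otimes K_M=(a,y)$ has index $2$.

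So the statement must be read with $\overline{\gamma_i},\overline{\gamma_j}$ independent, which is the situation of the Saltman result the paper simply cites. Under that hypothesis, the residue argument above is a complete proof.

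Your first (normalization) route should be discarded. Purity controls where the class ramifies, not the prime factors of the entries of a symbol representing it.
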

\begin{proof}
    See \cite[Theorem 2.5]{S07}.
\end{proof}

Next, we compute residues with the setup in (\ref{fibration}).
\begin{lem}\label{cod1}
    Let $X$ be as in (\ref{fibration}), and let $\beta_\zeta$ be as in (\ref{betadef}). Let $v$ be a valuation on $k(X)$ centered at a codimension $1$ point $M\in S$. Then $\partial^2_v(\beta_\zeta)=0$.
\end{lem}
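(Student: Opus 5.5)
The plan is to split according to whether the codimension~$1$ point $M$ lies on the ramification curve $C$ or not.

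First I note that $\beta_\zeta$ is the image of $\alpha_\zeta\in H^2(F,\Z/2\Z)$. The valuation $v$ restricts on $F$ to the discrete valuation of the local ring $\mathcal{O}_{S,M}$, up to the ramification index $e$. Hence, as in the commutative diagram in the proof of Proposition \ref{nr},
$$\partial^2_v(\beta_\zeta)=e\cdot\operatorname{res}_{\kappa(M)/\kappa(v)}\bigl(\partial^2_M(\alpha_\zeta)\bigr).$$

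\textbf{Case $M\notin C$.} Then $\partial^2_M(\alpha_\zeta)$ is a component of $\gamma_\zeta$ at a point outside $C$, so it is $0$, and therefore $\partial^2_v(\beta_\zeta)=0$.

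\textbf{Case $M=C_i$.} This is the main case. If $m_i=0$, the component $\gamma_\zeta$ at $C_i$ is $0$ and we conclude as above. If $m_i=1$, then $\partial^2_M(\alpha_\zeta)=\gamma_i=\partial^2_M(\alpha)$, and I compare $\alpha_\zeta$ with $\alpha$. Since $\alpha$ is the class of the generic conic $Q$, it vanishes in $H^2(F(Q),\Z/2\Z)=H^2(k(X),\Z/2\Z)$ by Theorem \ref{ovvker} with $s=2$. So $\beta_\zeta$ is also the image of $\alpha_\zeta-\alpha$. This class has residue $\gamma_i-\gamma_i=0$ at $M$, and the formula above then gives $\partial^2_v(\beta_\zeta)=\partial^2_v(\alpha_\zeta-\alpha)=0$.

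Equivalently, one can write $\alpha_\zeta-\alpha\in H^2_{\acute{e}t}(\mathcal{O}_{S,M},\Z/2\Z)$ by purity (Theorem \ref{pur} applied to the DVR $\mathcal{O}_{S,M}$). Its image then comes from $H^2_{\acute{e}t}(A_v,\Z/2\Z)$, because $\mathcal{O}_{S,M}\subset A_v$ by diagram (\ref{center}), so it has zero residue.

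The only subtle point, and where I expect the main obstacle, is the passage between $F$ and $k(X)$. I must make sure the residue compatibility holds when $v$ is centered at $M$ but $\kappa(v)$ is a possibly transcendental extension of $\kappa(M)$. This is handled by the inclusion $\mathcal{O}_{S,M}\hookrightarrow A_v$, which is a local homomorphism of DVRs, so the standard functoriality of residues applies with the factor $e$. I also need the vanishing of $\alpha$ in $H^2(k(X))$, which is immediate from the conic $Q$ having the rational point given by its generic point over $k(X)$.
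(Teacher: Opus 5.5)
Your proof is correct and follows essentially the same route as the paper. The paper uses the same case split ($M$ off $C$ or $m_i=0$, versus $M=C_i$ with $m_i=1$), and in the second case it likewise replaces $\alpha_\zeta$ by $\alpha_\zeta-\alpha$, which has the same image in $H^2(k(X),\Z/2\Z)$, and concludes via the residue compatibility diagram with the factor $e\cdot\operatorname{res}_{\kappa(M)/\kappa(v)}$.
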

\begin{proof}
    We have two cases:
        \begin{itemize}
            \item If $M\neq C_i$ for any $C_i$ or $M=C_i$ but $m_i=0$, then $\partial^2_M(\alpha_\zeta)=0$, which implies $\partial^2_v(\beta_\zeta)=0$.

            \item If $M=C_i$ with $m_i=1$, then we use the following commutative diagram
            \[\begin{tikzcd}
	{H^{2}(k(X), \mathbb{Z} / 2\Z)} && {H^{1}(\kappa(M), \mathbb{Z} / 2\Z)} \\
	\\
	{H^{2}(F, \mathbb{Z} / 2\Z)} && {H^{1}(\kappa(C_i), \mathbb{Z} / 2\Z)}
	\arrow["{\partial^2_v}", from=1-1, to=1-3]
	\arrow["{\operatorname{res}_{F/k(X)}}", from=3-1, to=1-1]
	\arrow["{\partial^2_i}"', from=3-1, to=3-3]
	\arrow["{e_{C_i/M}\operatorname{res}_{\kappa(C_i)/\kappa(M)}}"', from=3-3, to=1-3]
\end{tikzcd}\]
        to see that \begin{align*}
            \partial^2_v(\beta_\zeta)&=\partial^2_v\circ\operatorname{res}_{F/k(X)}(\alpha_\zeta-\alpha)\\
            &=e_{C_i/M}\operatorname{res}_{\kappa(C_i)/\kappa(M)}\circ \partial^2_i(\alpha_\zeta-\alpha)\\
            &=e_{C_i/M}\operatorname{res}_{\kappa(C_i)/\kappa(M)}(\gamma_i-\gamma_i)\\
            &=0.
        \end{align*}
        \end{itemize}
\end{proof}
\begin{lem}\label{cod2i}
    Let $X$ be as in (\ref{fibration}), and let $\beta_\zeta$ be as in (\ref{betadef}). Let $v$ be a valuation on $k(X)$ centered at a codimension 2 point $M\in S$ which is a closed point on only one $C_i$. Then $\partial^2_v(\beta_\zeta)=0$.
\end{lem}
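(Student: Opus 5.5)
The plan is to use Saltman's local form, Theorem \ref{S2.1} a), for both $\alpha$ and $\alpha_\zeta$ at $M$. The aim is to show that, after removing classes that are unramified at $M$, the difference $\alpha_\zeta-\alpha$ either is unramified at $M$ or equals $\alpha$ itself up to an unramified class. Both $\alpha$ and $\alpha_\zeta$ have ramification locus contained in $C$, which is a simple normal crossings divisor, and $M$ lies only on $C_i$. So Theorem \ref{S2.1} a) gives
\[
\alpha=\alpha'+(u,\pi_i),\qquad \alpha_\zeta=\alpha_\zeta'+(u',\pi_i)
\]
in $H^2(F,\Z/2\Z)$. Here $\alpha',\alpha_\zeta'\in H^2_{\acute{e}t}(\mathcal{O}_{S,M},\Z/2\Z)$ and $u,u'\in\mathcal{O}_{S,M}^\times$. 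Taking residues along $C_i$ shows that $\gamma_i=\bar u$ and $\partial^2_{C_i}(\alpha_\zeta)=m_i\gamma_i=\overline{u'}$ in $\kappa(C_i)^\times/\kappa(C_i)^{\times2}$.

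\textbf{Case $m_i=0$.} Then $\alpha_\zeta$ is unramified at every height one prime of $\mathcal{O}_{S,M}$, because its only possible ramification there is along $C_i$, where its residue vanishes. By Theorem \ref{pur}, $\alpha_\zeta\in H^2_{\acute{e}t}(\mathcal{O}_{S,M},\Z/2\Z)$. Let $A_v$ be the valuation ring of $v$. Since $\mathcal{O}_{S,M}\subset A_v$ by (\ref{center}), the image $\beta_\zeta$ comes from $H^2_{\acute{e}t}(A_v,\Z/2\Z)$, and hence $\partial^2_v(\beta_\zeta)=0$.

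\textbf{Case $m_i=1$.} Here $\overline{u'}=\bar u$ in $\kappa(C_i)^\times/\kappa(C_i)^{\times2}$. Consider $\alpha_\zeta-\alpha$. Its residue along $C_i$ is $0$, and it is unramified at the other height one primes of $\mathcal{O}_{S,M}$. By Theorem \ref{pur} it lies in $H^2_{\acute{e}t}(\mathcal{O}_{S,M},\Z/2\Z)$. Since $\alpha$ maps to $0$ in $H^2(k(X),\Z/2\Z)$, we have $\beta_\zeta=\operatorname{res}_{F/k(X)}(\alpha_\zeta-\alpha)$. This is the image of a class from $H^2_{\acute{e}t}(\mathcal{O}_{S,M},\Z/2\Z)$ and therefore from $H^2_{\acute{e}t}(A_v,\Z/2\Z)$, so its residue at $v$ vanishes.

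This argument is essentially the codimension-one argument of Lemma \ref{cod1} combined with purity. In fact Saltman's decomposition is not strictly needed: purity alone suffices once we check the residues along all height one primes of the local ring at $M$.

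The main point to verify is exactly that check. We must confirm that the height one primes of $\mathcal{O}_{S,M}$ at which $\alpha$ or $\alpha_\zeta$ can ramify are only the one through $C_i$. This follows because $M$ lies on no other component of $C$, and because $\alpha_\zeta$ has residues $\gamma_\zeta$, which are supported on $C$ by the definition of $\alpha_\zeta$ via $\phi_2(\alpha_\zeta)=\gamma_\zeta$.

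The remaining ingredient is that a class coming from $H^2_{\acute{e}t}(\mathcal{O}_{S,M},\Z/2\Z)$ has zero residue at any valuation $v$ whose ring dominates $\mathcal{O}_{S,M}$. This holds by functoriality of étale cohomology together with the vanishing of residues for classes coming from $H^2_{\acute{e}t}(A_v,\Z/2\Z)$, the same fact used in the proof of Proposition \ref{nr}.
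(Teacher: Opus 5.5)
Your proof is correct and follows the same route as the paper's. In both cases $m_i=0$ and $m_i=1$ you check that $\alpha_\zeta$ (resp.\ $\alpha_\zeta-\alpha$) has zero residue at every height one prime of $\mathcal{O}_{S,M}$, apply purity (Theorem \ref{pur}) to get a class in $H^2_{\acute{e}t}(\mathcal{O}_{S,M},\Z/2\Z)$, and use that $\alpha$ dies in $k(X)$. As you note yourself, the Saltman decomposition you set up at the start plays no role and can be dropped.
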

\begin{proof}
    If $m_i=0$, then for all curves $E$ through $M$, 
    $$
    \partial^2_E(\alpha_\zeta)=0.
    $$ 
    By theorem \ref{pur}  and the cohomological purity for discrete valuation rings  (see \cite[Section 3.3]{CT92}), $\alpha_\zeta\in H^2_{\acute{e}t}(\mathcal{O}_{S,M},\Z/2\Z)$ and the claim follows.

    Similarly, if $m_i=1$, then by the definition of $\alpha_{\zeta}$ we have that  for all curves $E$ through $M$, 
    $$
    \partial^2_E(\alpha-\alpha_\zeta)=0,
    $$ 
    hence $\alpha-\alpha_\zeta\in H^2_{\acute{e}t}(\mathcal{O}_{S,M},\Z/2\Z)$. The claim follows since the image of $\alpha$ in $k(X)$ is zero.

\end{proof}
\begin{lem}\label{cod2ij}
    Let $X$ be as in (\ref{fibration}), and let $\beta_\zeta$ be as in (\ref{betadef}). Let $v$ be a valuation on $k(X)$ centered at a codimension 2 point $M\in S$ which is the intersection of $C_i$ and $C_j$. If either one of the following is true:
    \begin{itemize}
        \item[1)] $m_i=m_j$;
        \item[2)] $m_i=1\neq m_j$ and $\overline{\gamma_i}=1$ in $\kappa(M)^\times/\kappa(M)^{\times2}$,
    \end{itemize} 
    then $\partial^2_v(\beta_\zeta)=0$.
\end{lem}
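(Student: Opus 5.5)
Case 1) splits into $m_i=m_j=0$ and $m_i=m_j=1$, and both go as in Lemma \ref{cod2i}; case 2) is where the real work is.

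\emph{Case 1).} If $m_i=m_j=0$, then $\alpha_\zeta$ has zero residue along every curve $E$ through $M$. This holds along $C_i$ and $C_j$ because $m_i=m_j=0$, and along any other curve because $\gamma_\zeta$ is supported on $C$. By Theorem \ref{pur} and purity for discrete valuation rings, $\alpha_\zeta\in H^2_{\acute{e}t}(\mathcal{O}_{S,M},\Z/2\Z)$. Its image in $k(X)$ then comes from $H^2_{\acute{e}t}(A_v,\Z/2\Z)$ via $\mathcal{O}_{S,M}\to A_v$, so $\partial^2_v(\beta_\zeta)=0$. If $m_i=m_j=1$, the same argument applied to $\alpha-\alpha_\zeta$ shows $\alpha-\alpha_\zeta\in H^2_{\acute{e}t}(\mathcal{O}_{S,M},\Z/2\Z)$. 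Since $\alpha$ dies in $H^2(k(X),\Z/2\Z)$ (the generic fiber $Q$ splits $A$), we get $\beta_\zeta=-(\alpha-\alpha_\zeta)|_{k(X)}$, which is unramified at $v$.

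\emph{Case 2): $m_i=1$, $m_j=0$, $\overline{\gamma_i}=1$.} First, $\partial^1_M(\gamma_\zeta)=0$ because $\gamma_\zeta$ lies in the kernel of $\phi_1$. Near $M$ only $C_i$ contributes, so $\partial^1_M(\gamma_i)=0$, meaning $\gamma_i$ is unramified at $M$. By the same Theorem \ref{pur} argument, $\alpha_\zeta$ differs from a local symbol $(u,\pi_i)$ by an element of $H^2_{\acute{e}t}(\mathcal{O}_{S,M},\Z/2\Z)$, where $u\in\mathcal{O}_{S,M}^\times$ lifts $\gamma_i$ near $M$. The point is that $(u,\pi_i)$ has residue $u=\gamma_i$ along $C_i$ and, because $u$ is a unit, no residue along any other curve through $M$. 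The hypothesis $\overline{\gamma_i}=1$ says $\bar u(M)$ is a square in $\kappa(M)$. Applying Proposition \ref{sq} (with $n$ replaced by the surface $S$; the proof is identical, using Hensel's lemma in $\widehat{\mathcal{O}}_{S,M}\subset\widehat{A_v}$), we get $\partial^2_v(u,\pi_i)=0$. The étale part is also unramified at $v$, so $\partial^2_v(\beta_\zeta)=0$.

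The main obstacle is getting this local decomposition cleanly. One must check that an element of $H^2(F,\Z/2\Z)$ whose only residue near $M$ is $\gamma_i$ along $C_i$, with $\gamma_i$ unramified at $M$, has the form $\alpha'+(u,\pi_i)$ with $\alpha'$ étale over $\mathcal{O}_{S,M}$. I expect to do this by subtracting $(u,\pi_i)$ and invoking Theorem \ref{pur}, as above, rather than quoting Theorem \ref{S2.1}. There is one subtlety: if the residue convention carries a sign, or the lift $u$ of $\gamma_i$ is only defined up to squares, this changes nothing mod 2.
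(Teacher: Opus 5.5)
Your proposal is correct and follows the paper's proof essentially step for step. Case 1) uses Theorem~\ref{pur} applied to $\alpha_\zeta$ or to $\alpha-\alpha_\zeta$, together with the vanishing of $\alpha$ in $k(X)$. Case 2) uses the local decomposition $\alpha_\zeta=\alpha_\zeta'+(u,\pi_i)$ with $\alpha_\zeta'$ étale over $\mathcal{O}_{S,M}$, and then Hensel's lemma in $\widehat{\mathcal{O}}_{S,M}$ to kill $(u,\pi_i)$.

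The only difference is how you get the decomposition. You subtract $(u,\pi_i)$ and apply purity directly, which works because $\partial^1_M(\gamma_i)=0$ lets you lift $\gamma_i$ to a unit of $\mathcal{O}_{S,M}$. The paper instead quotes Saltman's Theorem~\ref{S2.1}, so your version is slightly more self-contained.
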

\begin{proof}
    If $m_i=m_j=0$, then by Theorem \ref{pur}, $\alpha_\zeta\in H^2_{\acute{e}t}(\mathcal{O}_{S,M},\Z/2\Z)$, and the claim follows. 
    
    If $m_i=m_j=1$, then $\alpha-\alpha_\zeta\in
    H^2_{\acute{e}t}(\mathcal{O}_{S,M},\Z/2\Z)$ and the  claim follows since the image of $\alpha$ in $k(X)$ is zero.

    If $m_i=1\neq m_j$, then by local reciprocity we must have 
    $$
    \partial^1_M(\gamma_i)=\partial^1_M(\gamma_j)=0.
    $$ 
    By Theorem \ref{S2.1}, we can write
    $$
    \alpha_\zeta=(u,\pi_i)+\alpha_\zeta^\prime,
    $$ 
    where $u\in \mathcal{O}^\times_{S,M}$, 
      $\alpha_\zeta^\prime$ is in $ H^2_{\acute{e}t}(\mathcal{O}_{S,M},\Z/2\Z)$, 
      (so that the image of $\alpha_\zeta^\prime$ in $H^2(k(X),\Z/2\Z)$ is unramified at $v$), and  $\gamma_i=\partial_{C_i}(\alpha_{\zeta})=\bar u$.
    Since $\overline{\gamma_i}=1$, we deduce that the image of $u$ in $\kappa(M)$ is a square, hence, the image of $u$ in the completion $\widehat{\mathcal{O}}_{S,M}$ is a square. We deduce that  if $K_M$ is the field of fractions of $\widehat{\mathcal{O}}_{S,M}$ then the image of $(u,\pi_i)$ in $H^2(K_M,\Z/2\Z)$ is trivial. Since the residue $\partial^2_v$ factors though the completion,  we deduce that $\partial^2_v(\beta_\zeta)=\partial^2_v(u,\pi_i)=0$ using (\ref{center}).

\end{proof}

\subsection{The formula}

We first establish the converse of lemma \ref{cod2ij} in our setup. In particular, we show that at a codimension $2$ point $M\in S$ which is the intersection of two ramification curves, if $m_i=1\neq m_j$, then $\overline{\gamma}_i=1$ in $\kappa(M)^\times/\kappa(M)^{\times2}$ is a necessary and sufficient condition for $\beta_\zeta$ to be unramified at valuations centered at $M$. 
\begin{lem}\label{Scor}
    Let $\alpha_\zeta\in H^2(F,\Z/2\Z)$ have ramification divisor $\sum m_iC_i$. Fix a closed point $M\in C_i\cap C_j$ with $m_i=1$ and $m_j=0$. 
    Let $\overline{\gamma_i}$ and $\overline{\gamma_j}$ be the reductions of $\gamma_i$ and $\gamma_j$ in $H^1(\kappa(M),\Z/2\Z)$.  
    Then 
    \begin{itemize}
        \item [1.]$\alpha=\alpha'+(a_i,\pi_i)+(a_j,\pi_j)$, where $a_i,a_j\in \mathcal{O}^{\times}_{S,M}$ are lifts of $\gamma_i$ and $\gamma_j$, and $\alpha'\in H^2_{\acute{e}t}(\mathcal{O}_{S,M},\Z/2\Z)$.
        \item[2.] $\overline{\gamma_i}=\overline{\gamma_j}$.
    \end{itemize}
    
\end{lem}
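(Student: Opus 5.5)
The plan is to place $\alpha$ in the local normal form of Theorem \ref{S2.1} at $M$. The first step is to rule out the bad case b) ii), which gives part 1. Part 2 then follows from Corollary \ref{S2.5}, because $\alpha$ is the class of a quaternion algebra and so has index at most $2$.

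\textbf{Step 1: residues at $M$ vanish.} Since $\phi_1\circ\phi_2=0$ in the Gersten complex, $\phi_1(\phi_2(\alpha_\zeta))=0$. Near $M$ the only ramification curve of $\alpha_\zeta$ is $C_i$, because $m_j=0$ and the components of $C$ meet transversally, so only $C_i$ and $C_j$ pass through $M$. Looking at the $M$-component therefore gives $\partial^1_M(\gamma_i)=0$. Applying the same reasoning to $\alpha$, whose ramification curves through $M$ are $C_i$ and $C_j$, gives $\partial^1_M(\gamma_i)+\partial^1_M(\gamma_j)=0$, hence $\partial^1_M(\gamma_j)=0$ as well. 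In particular $\gamma_i$ and $\gamma_j$ are unramified at $M$. They can therefore be represented by units of the local rings of $C_i$ and $C_j$ at $M$, and they have well-defined reductions $\overline{\gamma_i},\overline{\gamma_j}\in H^1(\kappa(M),\Z/2\Z)$.

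\textbf{Step 2: normal form (part 1).} Apply Theorem \ref{S2.1} b) to $\alpha$ at the node $M$. In case ii), $\partial^2_{C_i}(\alpha)=\gamma_i$ would be ramified at $M$, which contradicts Step 1. So case i) holds:
$$\alpha=\alpha'+(u,\pi_i)+(v,\pi_j),$$
with $\alpha'\in H^2_{\acute{e}t}(\mathcal{O}_{S,M},\Z/2\Z)$ and $u,v\in\mathcal{O}_{S,M}^\times$. Next compute residues along $C_i$ using Proposition \ref{CTO}. The class $\alpha'$ is unramified along $C_i$, and $(v,\pi_j)$ is unramified along $C_i$ because $\pi_j$ is a unit at the generic point of $C_i$ (transversality). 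Hence $\gamma_i=\partial^2_{C_i}(\alpha)=\bar u$, up to the sign convention, which is harmless with $\Z/2\Z$-coefficients. Symmetrically, $\gamma_j=\bar v$. So $a_i:=u$ and $a_j:=v$ are lifts of $\gamma_i$ and $\gamma_j$, which is part 1.

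\textbf{Step 3: equality of reductions (part 2).} Suppose $\overline{\gamma_i}\neq\overline{\gamma_j}$. We are in case b) i), so Corollary \ref{S2.5} says that $\alpha$ has index larger than $2$ over $K_M$. But $\alpha$ is the class of the quaternion algebra $A$, and the index of $A\otimes K_M$ divides $2$. This contradiction gives $\overline{\gamma_i}=\overline{\gamma_j}$.

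The main obstacle is Step 2, specifically checking that the hypotheses of Theorem \ref{S2.1} and Corollary \ref{S2.5} are met for $\alpha$ at $M$. Excluding case ii) uses exactly the vanishing $\partial^1_M(\gamma_i)=0$, which is where the existence of $\alpha_\zeta$ (with $m_i=1$, $m_j=0$) enters. One must also make sure that the residues of the symbols $(u,\pi_i)$ and $(v,\pi_j)$ along $C_i$ and $C_j$ are computed correctly. That requires $\pi_i,\pi_j$ to form a regular system of parameters at $M$, which is the transversality assumption.
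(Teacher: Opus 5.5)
Your three steps are exactly the paper's proof. You rule out form b) ii) of Theorem \ref{S2.1} by reciprocity for $\alpha_\zeta$ at $M$, read off the lifts $u,v$, and then play Corollary \ref{S2.5} against the fact that the quaternion class $\alpha$ has index at most $2$. Steps 1 and 2 are sound, and somewhat more careful than the paper. The weak point is Step 3, not Step 2, and it is inherited from the way Corollary \ref{S2.5} is formulated: as stated, it fails when exactly one of $\overline{\gamma_i},\overline{\gamma_j}$ is trivial. Suppose $\overline{\gamma_i}=\overline{u}$ is trivial. Then $u$ is a square in $\widehat{\mathcal{O}}_{S,M}$ by Hensel's lemma, so over $K_M$ one has $\alpha=\alpha'+(v,\pi_j)$, with $\alpha'$ coming from $H^2(\kappa(M),\Z/2\Z)$. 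When $\alpha'=0$ this class has index $2$, even though $\overline{\gamma_i}\neq\overline{\gamma_j}$. So the index over $K_M$ only excludes the case where $\overline{\gamma_i},\overline{\gamma_j}$ are both nonzero and distinct. The case $\{\overline{\gamma_i},\overline{\gamma_j}\}=\{0,c\}$ with $c\neq 0$ is covered neither by your argument nor by the paper's.

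The statement is nevertheless true; the missing idea is to use the index at the generic point of $C_i$ rather than at $M$. Over the completion $F_{C_i}$ of $F$ along $C_i$, write $\alpha=\beta+(u,\pi_i)$ with $\beta=\alpha'+(v,\pi_j)$ unramified. Over a complete discretely valued field, a quaternion class with residue $\gamma_i\neq 0$ can be written $(c,d)$, with $c$ a unit whose reduction represents $\gamma_i$ and $d$ of odd valuation. Hence $c\in u\,F_{C_i}^{\times 2}$, and $\alpha$ is split by $F_{C_i}(\sqrt{u})$. Since $(u,\pi_i)$ also dies there, so does $\beta$. Passing to the residue field $L=\kappa(C_i)(\sqrt{\gamma_i})$ of $F_{C_i}(\sqrt{u})$, the specialization $\overline{\beta}=\overline{\alpha'}+(v|_{C_i},\pi_j|_{C_i})\in H^2(\kappa(C_i),\Z/2\Z)$ is split by $L$. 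By transversality, $\pi_j|_{C_i}$ is a uniformizer of $C_i$ at $M$, so $\partial_M(\overline{\beta})=\overline{\gamma_j}$. Since $\gamma_i$ is represented by the unit $u|_{C_i}$, the extension $L/\kappa(C_i)$ is unramified above $M$ with residue field $\kappa(M)(\sqrt{\overline{\gamma_i}})$. Taking the residue of $\overline{\beta}_L=0$ at a place above $M$ gives $\operatorname{res}(\overline{\gamma_j})=0$ in $H^1(\kappa(M)(\sqrt{\overline{\gamma_i}}),\Z/2\Z)$, that is, $\overline{\gamma_j}\in\{0,\overline{\gamma_i}\}$. The same argument along $C_j$, where $\gamma_j\neq 0$, gives $\overline{\gamma_i}\in\{0,\overline{\gamma_j}\}$. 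The two inclusions together force $\overline{\gamma_i}=\overline{\gamma_j}$ in every case, with no appeal to Corollary \ref{S2.5}.
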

\begin{proof}
    By Theorem \ref{S2.1}, $\alpha$ has two possible local forms at the intersection $M$:
    \begin{itemize}
        \item [i)] $\alpha=\alpha'+(a_i,\pi_i)+(a_j,\pi_j)$
        \item [ii)] $\alpha=\alpha'+(u\pi_j,v\pi_i)$
    \end{itemize}
    where $u,v\in \mathcal{O}_{S,M}^\times$ and $\alpha'\in H^2_{\acute{e}t}(\mathcal{O}_{S,M},\Z/2\Z)$. We show that form ii) is impossible under our assumption. Indeed, if $\alpha=\alpha'+(u\pi_j,v\pi_i)$, then $\gamma_i=\partial^2_{C_i}(\alpha)=u\pi_j$, and $\partial^1_M(\partial^2_{C_i}(\alpha))=\partial^1_M(u\pi_j)=1$. However, this breaks the local reciprocity of $\alpha_\zeta$ given by Gersten's complex:
    $$
    m_i\partial^1_M(\partial^2_{C_i}(\alpha))+m_j\partial^1_M(\partial^2_{C_j}(\alpha))=\partial^1_M(\gamma_i)=1\neq0.
    $$
    Hence $\alpha$ has to be of form i).

    Now assume $\overline{\gamma_i}$ and $\overline{\gamma_j}$ are not equal for the sake of contradiction. By Theorem \ref{S2.5}, $\alpha$ has index $>2$. However, $\alpha$ is a symbol corresponding to the generic conic, so it has index $\leq2$. We have reached a contradiction.
\end{proof}

\begin{prop}\label{trivialgamma}
   Let $\alpha_\zeta\in H^2(F,\Z/2\Z)$ have ramification divisor $\sum m_iC_i$. Fix a closed point $M\in C_i\cap C_j$ with $m_i=1$ and $m_j=0$. Denote by $\beta_\zeta$ the image of $\alpha_\zeta$ under the map $H^2(F,\Z/2\Z)\to H^2(k(X),\Z/2\Z)$. Then the following are equivalent:
\begin{itemize}
    \item [1.] $\beta_\zeta$ is unramified at any valuation centered at $M$;
    \item [2.] The reduction of $\gamma_i$ at $M$ is trivial in $H^1(\kappa(M),\Z/2\Z)$.
\end{itemize} 
\end{prop}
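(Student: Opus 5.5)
The direction $2\Rightarrow1$ is already done: it is case 2) of Lemma \ref{cod2ij}. If $\overline{\gamma_i}=1$ in $\kappa(M)^\times/\kappa(M)^{\times2}$, write $\alpha_\zeta=\alpha_\zeta'+(u,\pi_i)$ using Theorem \ref{S2.1}. Then $u$ becomes a square in $\widehat{\mathcal{O}}_{S,M}$, so $\partial^2_v(\beta_\zeta)=0$ for every $v$ centered at $M$. The real content is $1\Rightarrow2$, which I will prove by contraposition. Assume $\overline{\gamma_i}\neq1$, and construct one explicit divisorial valuation $v$ on $k(X)$ centered at $M$ with $\partial^2_v(\beta_\zeta)\neq0$.

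First fix the local forms. By Lemma \ref{Scor},
$$\alpha=\alpha'+(a_i,\pi_i)+(a_j,\pi_j),$$
with $\bar a_i=\bar a_j=\overline{\gamma_i}$ in $\kappa(M)^\times/\kappa(M)^{\times2}$. Local reciprocity forces $\partial^1_M(\gamma_i)=0$, since $m_i=1$ and $m_j=0$. Hence Theorem \ref{S2.1} a)/b)i) gives
$$\alpha_\zeta=\alpha_\zeta'+(a_i',\pi_i)$$
with $a_i'$ also reducing to $\overline{\gamma_i}$. Passing to $K_M$ and absorbing squares (Hensel), I may take $a_j=a_i'=a_i=:a$, with $\bar a$ a non-square in $\kappa(M)$. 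Then
$$\alpha=\alpha'+(a,\pi_i\pi_j),\qquad \alpha_\zeta=\alpha_\zeta'+(a,\pi_i),$$
and $\alpha',\alpha_\zeta'$ are unramified at $M$.

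Now choose $v$. Take the exceptional divisor $E$ of the blow-up of $S$ at $M$, with the monomial valuation $v_E(\pi_i)=v_E(\pi_j)=1$. The residue of $\alpha$ along $E$ is $\bar a\in H^1(\kappa(E),\Z/2\Z)$ raised to $v_E(\pi_i\pi_j)=2$, hence trivial. Therefore $\alpha$ is unramified at $E$, and the generic conic restricted over $E$ specializes to a smooth conic $Q_E$ over $\kappa(E)=\kappa(M)(t)$, where $t=\pi_j/\pi_i$. Its class is the specialization of $\alpha'+(a,\pi_i\pi_j)$. Writing $\pi_i\pi_j=\pi_i^2t$, this becomes $\bar\alpha'+(\bar a,t)$, and $\bar\alpha'$ comes from $\kappa(M)$. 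Let $v$ be the valuation on $k(X)$ given by the generic point of the component of $\pi^{-1}(E)$ in a suitable model; its residue field is $\kappa(E)(Q_E)$. Similarly, $\partial^2_E(\alpha_\zeta)=\bar a$, since $v_E(\pi_i)=1$. Thus $\partial^2_v(\beta_\zeta)$ is the image of $\bar a$ in $H^1(\kappa(E)(Q_E),\Z/2\Z)$.

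It remains to show $\bar a$ is not a square in $\kappa(E)(Q_E)$. This is the main obstacle. The kernel of $H^1(\kappa(E))\to H^1(\kappa(E)(Q_E))$ is trivial for a conic: a function field of a geometrically integral variety has $\kappa(E)$ algebraically closed in it. Hence $\bar a$ stays nonzero as long as it is not a square in $\kappa(E)=\kappa(M)(t)$, which holds because $\bar a\in\kappa(M)$ is a non-square. Beyond this, the step that needs care is justifying that $\alpha$ is unramified at $E$, so that $Q_E$ is a smooth conic and $v$ has the stated residue field. If that fails, one has to track ramification carefully and use Theorem \ref{OVV}-type kernel arguments instead. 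I will also check that replacing $\alpha_\zeta$ by $\alpha_\zeta-\alpha$ does not change the computation, which holds since $\alpha\mapsto0$ in $k(X)$.
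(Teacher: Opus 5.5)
Your proposal is correct and follows essentially the same route as the paper. Both arguments blow up $M$ and use $\overline{\gamma_i}=\overline{\gamma_j}$ from Lemma \ref{Scor} to get $\partial^2_E(\alpha)=0$. Both then take the valuation given by the smooth, geometrically integral special conic over $E$, and conclude by injectivity of $H^1(\kappa(M),\Z/2\Z)\to H^1(\kappa(E),\Z/2\Z)\to H^1(\kappa(E)(Q_E),\Z/2\Z)$; your contrapositive framing is only cosmetic. The ``suitable model'' you leave vague is exactly what the paper supplies: since $\alpha$ is unramified at $E$, the quaternion algebra extends to an Azumaya algebra over $\widehat{\mathcal{O}}_{S',E}$, so your closing hedge is unnecessary.
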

\begin{proof}  
The direction $2 \Rightarrow 1$ is proved by Lemma \ref{cod2ij}. We now prove  $1 \Rightarrow 2$.
Denote by $\pi_i$ and $\pi_j$ local parameters of $C_i$ and $C_j$ at $M$ respectively.
By Theorem \ref{S2.1}, we can write
$$\alpha=\alpha'+(a_i,\pi_i)+(a_j,\pi_j),$$ where $a_i,a_j\in \mathcal{O}^{\times}_{S,M}$ are lifts of $\gamma_i$ and $\gamma_j$ and by  Lemma \ref{Scor}, $\overline{\gamma_i}=\overline{\gamma_j}$ for  the reductions of $\gamma_i$ and $\gamma_j$ at $M$.

We then write
$$
\alpha_\zeta=(a_i,\pi_i)+\alpha_{\zeta}^\prime
$$
where  $\alpha_\zeta'$ is unramified at all curves through $M$, hence by Theorem \ref{pur}, $\alpha_\zeta^\prime\in H^2_{\acute{e}t}(\mathcal{O}_{S,M},\Z/2\Z)$.

    Denote $b:S'\to S$ the blow up of $S$ at $M$, $X'\coloneqq X\times_S S'$. We introduce a new local coordinate chart with substitution $\pi_i=\pi_i$ and $\pi_j=\pi_i{\pi_j^\prime}$, so $E\coloneqq \pi_i=0$ is the exceptional divisor and $\pi_j^\prime=0$ is the strict transform of $C_2$. Under this blow up,
    \begin{align*}
        \alpha&=(a_i,\pi_i)+(a_j,\pi_j)+ \alpha'\\
        &=(a_i,\pi_i)+(a_j,\pi_i\pi_j^\prime)+ \alpha'\\
        &=(a_ia_j,\pi_i)+(a_j,\pi_j^\prime)+\alpha'
    \end{align*}
    and
    \begin{align*}
        \partial^2_E(\alpha)=\partial^2_{E}(a_ia_j,\pi_i)+\partial^2_E(a_j,\pi_j^\prime)=\overline{a_i}+\overline{a_j},
    \end{align*}
    where $\overline{a_i},\overline{a_j}$ are the classes of $a_i,a_j$ in $H^1(\kappa(E),\Z/2\Z)$. Since $a_i,a_j$ are lifts of $\gamma_i,\gamma_j$ in $\mathcal{O}_{S,M}$, we have
    $$
   \overline{a_i}= \operatorname{res}_{\kappa(M)/\kappa(E)}(\overline{\gamma_i})\mbox{ and }\overline{a_j}= \operatorname{res}_{\kappa(M)/\kappa(E)}(\overline{\gamma_j}),
    $$
    where $\operatorname{res}_{\kappa(M)/\kappa(E)}$ is the restriction map from $H^1(\kappa(M),\Z/2\Z)$ to $H^1(\kappa(E),\Z/2\Z)$ induced by field extension.
    We then have
    $$
    \partial^2_E(\alpha)=\operatorname{res}_{\kappa(M)/\kappa(E)}(\overline{\gamma_i})+\operatorname{res}_{\kappa(M)/\kappa(E)}(\overline{\gamma_j})=0
    $$
    because $\overline{\gamma_i}=\overline{\gamma_j}$. We conclude $\alpha$ is unramified along $E$.

    Let $\widehat{\mathcal{O}}_{S',E}$ be the completion of the local ring $\mathcal{O}_{S',E}$ and $F_E$ its field of fractions. Let $\alpha_{F_E}$ be the base change of $\alpha$ to $F_E$. Note that $\alpha_{F_E}$ is still unramified over $\widehat{\mathcal{O}}_{S',E}$. Hence the  quaternion algebra $A$ over $F_E$ extends to an Azumaya algebra $\Lambda$ over $\widehat{\mathcal{O}}_{S',E}$ (\cite[Proposition 1.1 and 1.2]{F97}). This means we have a Severi-Brauer scheme $\mathcal{X}$ over $\widehat{\mathcal{O}}_{S',E}$ with generic fiber $\mathcal{X}_{F_E}\coloneqq X'_F\times_{F}F_E$ the base change of the generic fiber of $X'$. The special fiber $\mathcal{X}_{\eta_E}$ over the closed point of  $\widehat{\mathcal{O}}_{S',E}$ is hence a Severi-Brauer variety over $\kappa(E)$ of dimension $1$, which in particular means it is smooth and irreducible.

    Let $F_E(\mathcal{X})$ be the function field of the local Severi-Brauer scheme $\mathcal{X}$ and $v_{\mathcal{X}_{\eta_{E}}}$ be the valuation on $F_E(\mathcal{X})$ of the special fiber $\mathcal{X}_{\eta_E}$. There is a natural inclusion $k(X)\hookrightarrow F_E(\mathcal{X})$. By assumption, $\beta_\zeta$ is unramified at valuations centered at $M$, so its image in $H^2(F_E(\mathcal{X}),\Z/2\Z)$ must have trivial residue under the residue map  $\partial^2_{\mathcal{X}_{\eta_{E}}}$. 

    Let $v_E$ be the valuation on $F_E$ centered at $E$. Consider the following commutative diagram:
\[\begin{tikzcd}
	{{H^2(k(X),\Z/2\Z)}} && {H^2(F_E(\mathcal{X}),\Z/2\Z)} && {H^1(\kappa(E)(\mathcal{X}_{\eta_E}),\Z/2\Z)} \\
	\\
	{{H^2(F,\Z/2\Z)}} && {H^2(F_E,\Z/2\Z)} && {H^1(\kappa(E),\Z/2\Z)}
	\arrow["{\operatorname{res}_{k(X)/F_E(\mathcal{X})}}",  from=1-1, to=1-3]
	\arrow["{\partial^2_{\mathcal{X}_{\eta_{E}}}}", from=1-3, to=1-5]
	\arrow["{\pi^*}", from=3-1, to=1-1]
	\arrow["{\operatorname{res}_{F/F_E}}"', from=3-1, to=3-3]
	\arrow["{\operatorname{res}_{F_E/F_E(\mathcal{X})}}", from=3-3, to=1-3]
	\arrow["{\partial^2_E}"', from=3-3, to=3-5]
	\arrow["{\phi^*}"', hook',from=3-5, to=1-5]
\end{tikzcd}\]
    where $\phi^*$ is injective because the special fiber is smooth and irreducible. We have that
    \begin{align*}
        \partial^2_{E}(\operatorname{res}_{F/F_E}(\alpha_\zeta))&=\partial^2_E\Big( (a_i, \pi_i)_{F_E} + \operatorname{res}_{F/F_E}(\alpha^\prime) \Big)\\
      &=\partial^2_E\Big( (a_i, \pi_i)_{F_E} \Big)\\
      &=\gamma_i(M)\in H^1(\kappa(E),\Z/2\Z).     \end{align*}
    Since we have established that $\partial^2_{\mathcal{X}_{\eta_{E}}}(\operatorname{res}_{k(X)/F_E(\mathcal{X})}(\beta_\zeta))=0$, we have to have $\phi^*(\gamma_i(M))=0$ and hence $\gamma_i(M)=0$ by injectivity of $\phi^*$. Since $\gamma_i(M)$ is the image of $\overline{\gamma_i}$ by the injective map $H^1(\kappa(M),\Z/2\Z)\to H^1(\kappa(E),\Z/2\Z)$, we finish the proof.

\end{proof}

Putting this together we can provide a formula for $H^2_{nr}(k(X)/k,\Z/2\Z)$:
\begin{thm}\label{mainformula}
Let $k$ be a field of characteristic $char(k)\neq2$. Let $S$ be a smooth, projective, rational surface over $k$ and let $F$ be the field of functions of $S$. Let $X$ be a threefold equipped with a conic bundle structure $\pi: X\to S$ and let $\alpha\in Br(F)[2]$ be the class corresponding to the quaternion algebra $A$ associated to the conic given by the generic fiber $Q$ of $\pi$. Assume that $\alpha$ is nonzero and that the ramification curve 

$$C\coloneqq \left\{x\in S^{(1)}\mid \partial^2_x(\alpha)\neq0\right\}$$
has smooth irreducible components intersecting transverselly. Let $C=\cup_1^nC_i$ be the irreducible decomposition of $C$, and let 
$$\{(\gamma_i)\}_i\in \oplus_1^n H^1(\kappa(C_i),\Z/2\Z)$$ be the associated family of residues of $\alpha$. For $\zeta=\{(m_i)\}_i\in (\Z/2\Z)^n$, denote

    $$\gamma_{\zeta}=(\gamma_i^{m_i})\in\oplus_{x\in{S}^{(1)}}H^1(\kappa(x),\Z/2\Z).$$ 
    
    Consider the subgroup $H\subset(\Z/2\Z)^n$ of $n$-tuples $(m_i)$ such that 
    \begin{itemize}
    
    \item [(1)] for every $\zeta\in H$, $\gamma_{\zeta}$ lifts\footnote{ $\;$ i.e. there exists a $\alpha_\zeta\in H^2(F,\Z/2\Z)$ such that $\phi_2(\alpha_{\zeta})=\gamma_{\zeta}$ as in definition (\ref{deflifts})} to $H^2$. In particular, if $i\neq j$ and there is a point $P\in C_i\cap C_j$ such that $\partial^1_P(\gamma_i)=\partial^1_P(\gamma_j)\neq0$ then $m_i=m_j$;
        
        \item [(2)] if there is a point $P\in C_i\cap C_j$ such that $\partial^1_P(\gamma_i)=\partial^1_P(\gamma_j)=0$ but $m_i=1\neq m_j$, then $\overline{\gamma_i}=1$ in $\kappa(P)^\times/\kappa(P)^{\times2}.$
    \end{itemize}
    Then $H^2_{nr}(k(X)/k,\Z/2\Z)/H^2(k,\Z/2\Z)=H/(1,\ldots,1)$.
\end{thm}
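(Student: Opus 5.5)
We construct a homomorphism $\Psi\colon H\to H^2_{nr}(k(X)/k,\Z/2\Z)/H^2(k,\Z/2\Z)$ by $\zeta\mapsto\beta_\zeta$, then prove that $\Psi$ is surjective with kernel generated by $(1,\ldots,1)$.

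\emph{Well-definedness and homomorphism.} For $\zeta\in H$ we pick a lift $\alpha_\zeta$ as in (\ref{deflifts}). Two lifts differ by an element of $\operatorname{Ker}\phi_2$. Since $S$ is smooth projective and rational, this kernel is the image of $H^2(k,\Z/2\Z)$: the Gersten complex is exact at $H^2(F)$ by purity, and $\operatorname{Br}$ of a smooth projective rational surface is constant modulo $2$-torsion considerations. So $\beta_\zeta$ is well defined modulo constants, and additivity in $\zeta$ is clear.

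\emph{$\beta_\zeta$ is unramified.} Let $v$ be a discrete valuation on $k(X)$ trivial on $k$ with center $M$ on $S$.
\begin{itemize}
\item If $M$ is the generic point, $v$ restricts trivially to $F$, so $\alpha_\zeta$ comes from $H^2_{\acute et}$ of the valuation ring and $\partial_v(\beta_\zeta)=0$.
\item If $M$ has codimension $1$, we use Lemma \ref{cod1}.
\item If $M$ is a closed point on no $C_i$, all residues of $\alpha_\zeta$ through $M$ vanish. By Theorem \ref{pur}, $\alpha_\zeta\in H^2_{\acute et}(\mathcal O_{S,M})$, so the residue is $0$.
\item If $M$ lies on exactly one $C_i$, we use Lemma \ref{cod2i}.
\item If $M\in C_i\cap C_j$, there are two subcases. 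When $\partial^1_M(\gamma_i)\ne0$, reciprocity at $M$ for both $\alpha$ and $\alpha_\zeta$ forces $m_i=m_j$, which is condition (1). Otherwise condition (2) applies. In either case Lemma \ref{cod2ij} gives vanishing.
\end{itemize}
Hence $\Psi$ lands in the unramified group.

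\emph{Kernel.} Suppose $\beta_\zeta$ is constant. By Theorem \ref{ovvker} with $s=2$, or Witt's theorem for conics, $\ker[H^2(F)\to H^2(F(Q))]=\{0,\alpha\}$. Therefore $\alpha_\zeta\in H^2(k)+\{0,\alpha\}$. Taking $\phi_2$ gives $\gamma_\zeta=0$ or $\gamma_\zeta=\phi_2(\alpha)$. Since every $\gamma_i$ is nonzero, $\zeta=0$ or $\zeta=(1,\ldots,1)$. Conversely, $(1,\ldots,1)\in H$ with lift $\alpha$, and $\beta_{(1,\ldots,1)}=0$.

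\emph{Surjectivity, the main step.} Let $\beta\in H^2_{nr}(k(X)/k,\Z/2\Z)$. First we show that $\beta$ comes from $H^2(F)$. Since the generic fiber is a conic, the map $\operatorname{Br}F\to\operatorname{Br}F(Q)$ is surjective onto the part that is unramified over $F$: use the exact sequence $\operatorname{Br}F\to\operatorname{Br}F(Q)\to\bigoplus_{Q^{(1)}}H^1$ together with $\operatorname{Br}(Q)=\operatorname{Br}(F)/\langle\alpha\rangle$ for a smooth conic. So $\beta=\pi^*\alpha_0$ for some $\alpha_0\in H^2(F)$.

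Next we control the residues of $\alpha_0$. Let $x\in S^{(1)}$. If $x\notin C$, the fiber over $x$ is a smooth conic, and the restriction $H^1(\kappa(x))\to H^1(\kappa(x)(Q_x))$ is injective. Vanishing of $\beta$'s residue at the valuation of the fiber then forces $\partial_x\alpha_0=0$. If $x=C_i$, the fiber is a pair of lines conjugate over $\kappa(C_i)(\sqrt{\gamma_i})$, and the kernel of the relevant restriction is generated by $\gamma_i$. Hence $\partial_x\alpha_0=m_i\gamma_i$ for some $m_i$. This makes $\phi_2(\alpha_0)=\gamma_\zeta$, so condition (1) holds.

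Finally, condition (2) at intersection points follows from Proposition \ref{trivialgamma}, direction $1\Rightarrow2$, applied to the unramified $\beta_\zeta=\beta$. Therefore $\zeta\in H$ and $\beta\equiv\beta_\zeta$ modulo constants.

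I expect the main obstacle to be this surjectivity step: lifting $\beta$ to $H^2(F)$ and identifying residues along degenerate fibers over non-closed residue fields. For the degenerate fibers I would first use the explicit local form $\langle1,-u,-\pi_i\rangle$ of the conic along $C_i$ and compute the residue directly.
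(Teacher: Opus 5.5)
Your architecture matches the paper's, run in the opposite direction. The paper builds $\Phi\colon H^2_{nr}(k(X)/k,\Z/2\Z)\to H/(1,\ldots,1)$ from residues, while you build $\Psi\colon\zeta\mapsto\beta_\zeta$. The ingredients are the same: Lemmas \ref{cod1}, \ref{cod2i}, \ref{cod2ij} for unramifiedness of $\beta_\zeta$; $\ker[H^2(F)\to H^2(k(X))]=\{0,\alpha\}$; fibre valuations over points of $S^{(1)}$; and Proposition \ref{trivialgamma}. You also treat closed points off $C$ by purity, which the paper's final case list omits.

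There is a genuine gap at the start of your surjectivity step. The identity $\operatorname{Br}(Q)=\operatorname{Br}(F)/\langle\alpha\rangle$ only gives some $b\in\operatorname{Br}(F)$ with $\pi^*b=\beta$. The condition $2\beta=0$ then only gives $2b\in\{0,\alpha\}$. If $2b=\alpha$, neither $b$ nor $b+\alpha$ lies in $\operatorname{Br}(F)[2]=H^2(F,\Z/2\Z)$, so no $\alpha_0$ exists.

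This really happens. For the non-split conic $Q$ over $\mathbb{Q}_p$, $\operatorname{Br}(Q)=(\mathbb{Q}/\Z)/\langle\tfrac12\rangle$ contains the nonzero $2$-torsion class of $\tfrac14$, which does not come from $\operatorname{Br}(\mathbb{Q}_p)[2]$. So unramifiedness over $F$ alone cannot give the lift, and you must use valuations nontrivial on $F$. The paper simply cites Arason \cite{Ara75} for this step.

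Your own residue step repairs it if you run it with $\mathbb{Q}/\Z$-coefficients on $b$. Over the completion at $C_i$ the conic is $\langle 1,-u,-\pi_i\rangle$, with $u$ a unit satisfying $\bar u=\gamma_i$ and $\pi_i$ a uniformizer. The generic point of the special fibre of this model gives a valuation with ramification index $1$ and residue field $L(t)$, where $L=\kappa(C_i)(\sqrt{\gamma_i})$. Unramifiedness of $\beta$ then forces $\partial_{C_i}(b)\in\ker[H^1(\kappa(C_i),\mathbb{Q}/\Z)\to H^1(L,\mathbb{Q}/\Z)]=\{0,\gamma_i\}$. Hence $\partial_{C_i}(2b)=0\neq\gamma_i=\partial_{C_i}(\alpha)$, so $2b=0$ and $b\in H^2(F,\Z/2\Z)$, provided $C\neq\emptyset$.

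That proviso is essential. Take $k=\mathbb{Q}_p$, $Q_0$ the non-split conic and $X=Q_0\times\mathbb{P}^2_k$. Then $C=\emptyset$, yet $H^2_{nr}(k(X)/k,\Z/2\Z)=\operatorname{Br}(Q_0)[2]\cong\Z/2\Z$, while $H^2(k,\Z/2\Z)$ maps to $0$. So the formula itself fails in that case.

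Relatedly, the ``fibre over $x$'' in your residue computations must be the special fibre of such a regular model over $\widehat{\mathcal O}_{S,x}$, as in the paper, not the fibre of the given $X$.
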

\begin{proof}

    By Theorem \ref{ovvker}, the kernel of the restriction map $$\operatorname{res}_{F/k(X)}:H^s(F,\Z/2\Z)\rightarrow H^s(k(X),\Z/2\Z)$$ is $\Z/2\Z$ generated by $\alpha$. 
    
    Let $\beta_{0}\in H_{nr}^2(k(X)/k,\Z/2\Z)$. By \cite{Ara75}, $\beta_{0}$ is the image of an element $\alpha_0\in H^2(F,\Z/2\Z).$ We now analyze the residues of $\alpha_0$ at points  $x \in S^{(1)}$. Let $v$ be the discrete valuation on $F$ corresponding to $x$. Denote by $F_x$ the completion of $F$ at $x$. 
    
    If $x\neq C_i$ for $C_i\subset C$, then we proceed with a similar argument as in the proof of Proposition \ref{trivialgamma}. Indeed, $\alpha$ is unramified at $x$, and the conic $Q\times_F F_x$ has good reduction over $\widehat{\mathcal{O}}_{S,x}$. The special fiber is a Severi-Brauer variety over $\kappa(x)$, and the generic point of this special fiber yields a discrete valuation $w_x$ on $F_x(Q)$. Restricting $w_x$ to the function field $k(X)$ defines a valuation $w$ on $k(X)$ which extends $v$. Since residue fields are invariant under completion, and the special fiber is smooth and irreducible, the map $H^1(\kappa(x), \mathbb{Z}/2\mathbb{Z}) \to H^1(\kappa(w), \mathbb{Z}/2\mathbb{Z})$ is injective. Since $\beta_0 = \pi^*(\alpha_0)$ is unramified on $X$, the commutative diagram

\[\begin{tikzcd}
	{{H^2(k(X), \Z/2\Z)}} && {H^1(\kappa(w), \Z/2\Z)} \\
	\\
	{H^2(F, \Z/2\Z)} && {H^1(\kappa(x), \Z/2\Z)}
	\arrow["{\partial^2_w}", from=1-1, to=1-3]
	\arrow["{\pi^*}", from=3-1, to=1-1]
	\arrow["{\partial^2_x}"', from=3-1, to=3-3]
	\arrow["{\pi_x^*}"', from=3-3, to=1-3]
\end{tikzcd}\]
shows $\partial^2_x(\alpha_0)=0$.

    If $x=C_i$ for some $C_i\subset C$, then by \cite[Theorem 1.4]{A82}, there is a model of $X$ over $\widehat{\mathcal{O}}_{S,x}$ such that the special geometric fiber contains $2$ components of multiplicity $1$. These two components are both rational and are conjugate over $\kappa(x)$. Thus there exists a valuation $w$ on $k(X)$ which extends $v$ such that $\kappa(w)$ is a purely transcendental extension of $L$, where $L$ is an extension of $\kappa(x)$ of degree $2$. Since $\kappa(w)$ is a purely transcendental extension of $L$, the map $H^1(L,\Z/2\Z)\to H^1(\kappa(w),\Z/2\Z)$ is injective, and we have the commutative diagram 
\[\begin{tikzcd}
	{{H^2(k(X), \Z/2\Z)}} && {H^1(\kappa(w), \Z/2\Z)} \\
	&& {H^1(L, \Z/2\Z)} \\
	{H^2(F, \Z/2\Z)} && {H^1(\kappa(x), \Z/2\Z)}
	\arrow["{\partial^2_w}", from=1-1, to=1-3]
	\arrow["\operatorname{res}_{L/\kappa(w)}"', hook', from=2-3, to=1-3]
	\arrow["{\pi^*}", from=3-1, to=1-1]
	\arrow["{\partial^2_x}"', from=3-1, to=3-3]
	\arrow["\operatorname{res}_{\kappa(x)/L}"', from=3-3, to=2-3]
\end{tikzcd}.\]
In particular, $$\operatorname{Ker}[H^1(\kappa(x),\Z/2\Z)\to H^1(\kappa(w),\Z/2\Z)]=\operatorname{Ker}\operatorname{res}_{\kappa(x)/L}.$$

    Because $\alpha$ is split by $k(X)$, by the commutative diagram, $\partial_x^2(\alpha)=\gamma_i$ is in the kernel of $\operatorname{res}_{\kappa(x)/L}$. By assumption $\partial_x^2(\alpha)=\gamma_i$ is a nontrivial element in $H^1(\kappa(x),\Z/2\Z)$, so it corresponds to a cyclic Galois extension of $\kappa(x)$ of degree $2$. This extension coincides with $L$ because $\partial_x^2(\alpha)\in\operatorname{Ker}\operatorname{res}_{\kappa(x)/L}$ and the degree $[L:\kappa(x)]=2$. Hence the kernel of $\operatorname{res}_{\kappa(x)/L}$ is generated by $\gamma_i$. Since $\beta_0 = \pi^*(\alpha_0)$ is unramified on $X$, the commutative diagram shows that $\partial^2_x(\alpha_0)\in \operatorname{Ker}\operatorname{res}_{\kappa(x)/L}$. Therefore $\partial^2_x(\alpha_0)=m_i\gamma_i$ for some $m_i\in\{0,1\}$.

    We check this $n$-tuple $(m_i)$ satisfies the three conditions. Indeed, condition $(1)$ is trivially satisfied and condition $(2)$ is satisfied by Proposition \ref{trivialgamma}.

Let $$\Phi: H^2_{nr}(k(X)/k,\Z/2\Z)\rightarrow H/(1,\ldots,1)$$ be the homomorphism defined by residue data: as above, for an unramified class $\beta_0\in H^2_{nr}(k(X)/k,\Z/2\Z)$, its image in $H^2(k(X),\Z/2\Z)$ is the image of an element $\alpha_0\in H^2(F,\Z/2\Z)$ such that
    $$
    \partial^2_{C_i}(\alpha_0)=m_i\gamma_i,\mbox{ with }m_i\in\{0,1\},\mbox{ and }\partial^2_x(a_0)=0\mbox{ for } x\in S^{(1)}\setminus C
    $$
    and the $n$-tuple $\{(m_i)\}_i$ satisfies conditions $(1)$ and $(2)$ in the statement of the theorem. Define $$\Phi(\beta_0)=\{(m_i)\}_i.$$ This map is independent of the choice of $\alpha_0$ and hence well defined: if $\alpha_0$ and $\alpha_0^\#$ are both elements in $H^2(F,\Z/2\Z)$ mapping to the image of $\beta_0$, then $\alpha_0-\alpha_0^\#\in\operatorname{Ker}\operatorname{res}_{F/k(X)}=\langle\alpha\rangle$. Hence $\Phi(\alpha_0-\alpha_0^\#)\in\langle(1,\ldots,1)\rangle$.

    We first show that $\operatorname{Ker} \Phi=H^2(k,\Z/2\Z)$. The direction $H^2(k,\Z/2\Z)\subset \operatorname{Ker} \Phi$ is clear, so we prove the other direction. Assume $\beta_0\in \operatorname{Ker} \Phi$. Since two elements in $H^2(F,\Z/2\Z)$ map to the same image in $H^2(k(X),\Z/2\Z)$ if and only if they differ by $\alpha$, we may pick $\alpha_0$ such that $\partial^2_{C_i}(\alpha_0)=0$. Hence $\alpha_0\in H^2_{nr}(k(S)/k,\Z/2\Z)$. Since $S$ is rational, $H^2_{nr}(k(S)/k,\Z/2\Z)=H^2(k,\Z/2\Z).$
    
    Next, we show surjectivity. For every $\zeta\in H$, $\gamma_{\zeta}$ lifts to some $\alpha_{\zeta}$ in $H^2$, and we let $\beta_\zeta$ be the image of $\alpha_\zeta$ under the map $H^2(F,\Z/2\Z)\to H^2(k(X),\Z/2\Z)$.
    We show that $\beta_\zeta\in H^2_{nr}(k(X)/k,\Z/2\Z)$.

    Let $v$ be a valutation on $k(X)$ centered at $M\in S$. 

    If $M$ is the generic point, then $\beta_\zeta$ is in $H^2_{\acute{e}t}(\mathcal{O}_{S,M})$, and $\partial^2_v(\beta_\zeta)=0$.

    If $M$ is of codimension $\geq 1$, this is handled by Lemma \ref{cod1}, \ref{cod2i}, and \ref{cod2ij}.

\end{proof}

\nocite{*}
\printbibliography
\end{document}